\documentclass[letterpaper]{article}
\usepackage{graphicx,xcolor}
\usepackage{amsmath,amsthm}
\usepackage{amssymb}
\usepackage{float}

\usepackage[english]{babel}

\theoremstyle{plain}

\newtheorem*{theorem*}{Theorem}
\newtheorem{theorem}{Theorem}[section]
\newtheorem{claim}{Claim}[section]

\newtheorem{lemma}[theorem]{Lemma}
\newtheorem{proposition}[theorem]{Proposition}
\newtheorem{corollary}[theorem]{Corollary}

\newtheorem*{questions*}{Questions}
\newtheorem*{example*}{Example}

\def\NN{\mathbb{N}}
\def\ZZ{\mathbb{Z}}

\def\ag{\mathcal{A}}

\def\CC{\mathcal{C}}

\def\FF{\mathcal{F}}
\def\Orb{\text{Orb}}
\def\Aut{\text{Aut}}

\newcommand{\define}[1]{\emph{#1}}

\usepackage{tikz}
\usetikzlibrary{patterns,positioning,arrows,decorations.markings,calc,decorations.pathmorphing,decorations.pathreplacing}
\usetikzlibrary{lindenmayersystems}
\usetikzlibrary{arrows}
\usepackage{tikz-3dplot}

\tikzstyle directedgreen=[postaction={decorate,decoration={markings,
    mark=at position .65 with {\arrow[scale=1]{latex}}}}]
\tikzstyle directedred=[postaction={decorate,decoration={markings,
    mark=at position .55 with {\arrow[scale=1]{latex}}}}]

\tdplotsetmaincoords{85}{97}
  

\definecolor{rouge}{RGB}{255,77,77}
\definecolor{vert}{RGB}{0,178,102}
\definecolor{jaune}{RGB}{255,255,0}
\definecolor{violet}{RGB}{208,32,144}
\definecolor{orange}{RGB}{255,140,0}
\definecolor{bleu}{RGB}{0,0,205}

\newcommand{\tilea}{%
	\begin{tikzpicture}[scale = 0.6]
	\draw [ thick] (-0.2,-0.2) rectangle (0.2,0.2);
	\end{tikzpicture}
}
\newcommand{\tileb}{%
	\begin{tikzpicture}[scale = 0.6]
	\draw [ thick, fill = black] (-0.2,-0.2) rectangle (0.2,0.2);
	\end{tikzpicture}
}

\title{A generalization of the simulation theorem for semidirect products\\}
\date{}
\author{Sebasti\'an Barbieri and Mathieu Sablik}

\begin{document}
 
\maketitle 
 
\begin{abstract}
We generalize a result of Hochman in two simultaneous directions: Instead of realizing an effectively closed $\ZZ^d$ action as a factor of a subaction of a $\ZZ^{d+2}$-SFT we realize an action of a finitely generated group analogously in any semidirect product of the group with $\ZZ^2$. Let $H$ be a finitely generated group and $G = \ZZ^2 \rtimes H$ a semidirect product. We show that for any effectively closed $H$-dynamical system $(Y,f)$ where $Y$ is a Cantor set, there exists a $G$-subshift of finite type $(X,\sigma)$ such that the $H$-subaction of $(X,\sigma)$ is an extension of $(Y,f)$. In the case where $f$ is an expansive action of a recursively presented group $H$, a subshift conjugated to $(Y,f)$ can be obtained as the $H$-projective subdynamics of a $G$-sofic subshift. As a corollary, we obtain that $G$ admits a non-empty strongly aperiodic subshift of finite type whenever the word problem of $H$ is decidable.
\end{abstract}

\section{Introduction}

A dynamical system is a tuple $(X,T)$ where $X$ is a set and $T: X \to X$ is a map which describes the evolution of points of $X$ in time. In the case where $T$ is bijective one can describe $T$ as a $\ZZ$-action by associating $(n,x) \to T^n(x)$. This can be generalized to a set of bijective maps $T_1,\dots,T_n$ which satisfy some set of relations $R$ --for instance, the relation $T_1\circ T_2 = T_2 \circ T_1$ which indicates $T_1$ and $T_2$ commute--. These actions and their relations can be expressed by the group action $\mathcal{T}: G \times X \to X$ where $G \cong \langle T_1,\dots,T_n \mid R \rangle$ and $\mathcal{T}(T_{i_1} \circ \dots \circ T_{i_k},x) = T_{i_1} \circ \dots \circ T_{i_k}(x)$.

More than often dynamical systems arising from group actions are difficult to study, and a fruitful technique is to look at their subactions, that is, the restriction of the group action to a particular subgroup. For instance, see the study of expansive subdynamics of $\ZZ^d$ actions~\cite{BL1997expansive, einsiedler2001expansive}. It is thus appealing to ask the following question: What systems can be obtained as subactions of a class of dynamical systems? An interesting class is the one of subshifts of finite type (SFT), that is, the sets of colorings of a group along with the shift action which are defined by a finite number of forbidden patterns.

For the class of $\ZZ^d$-SFTs there is still no characterization of which dynamical systems can arise as their subactions, nevertheless, it has been proven by Hochman~\cite{Hochman2009b} that every $\ZZ^d$-action over a cantor set $T: \ZZ^d \times X \to X$ which is effectively closed -- meaning that it can be described with a Turing machine-- admits an almost trivial isometric extension which can be realized as the subaction of a $\ZZ^{d+2}$-SFT. This result has subsequently been improved for the expansive case independently in \cite{AubrunSablik2010} and \cite{DurandRomashchenkoShen2010} showing that every effectively closed subshift can be obtained as the projective subdynamics of a sofic $\ZZ^2$-subshift. These kind of results yield powerful techniques to prove properties about the original systems. An example is the characterization of the set of entropies of $\ZZ^2$-SFTs~\cite{HochmanMeyerovitch2010} as the set of right recursively enumerable numbers.

In this article we extend Hochman's result to the case of group actions for groups which are of the form $G = \ZZ^2 \rtimes_{\varphi} H$ for some finitely generated group $H$ and an homomorphism $\varphi: H \to \Aut(\ZZ^2)$. More specifically we prove the following result.
{
	\renewcommand{\thetheorem}{\ref{simulationTHEOREM}}
	\begin{theorem}
		For every $H$-effectively closed dynamical system $(X,f)$ there exists a $(\ZZ^2 \rtimes H)$-SFT whose $H$-subaction is an extension of $(X,f)$.
	\end{theorem}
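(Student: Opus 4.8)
The plan is to construct $(X_{\mathrm{SFT}},\sigma)$ in layers, adapting the self-simulating (``fixed-point'') proof of the $\ZZ^2$ simulation theorem \cite{Hochman2009b,AubrunSablik2010,DurandRomashchenkoShen2010} to the twisted fibered structure of $G=\ZZ^2\rtimes_\varphi H$. I would first record a normal form for $G$-subshifts: written in the natural coordinates on each coset $\ZZ^2 h$, a configuration becomes an $H$-indexed family $(y_h)_{h\in H}$ of $\ZZ^2$-configurations; a finite set of $G$-forbidden patterns becomes a uniform $\ZZ^2$-SFT constraint on every $y_h$, together with ``inter-sheet'' constraints relating $y_h$ and $y_{hs}$ through the fixed linear automorphism $\varphi(s)^{-1}$ for $s$ in a finite generating set of $H$; and the action of $h\in H$ merely permutes the sheets. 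This reduces the task to designing a $\ZZ^2$-structure per sheet together with a mechanism by which sheets adjacent in the Cayley graph of $H$ exchange information.

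On every sheet I would install the standard self-simulating hierarchical $\ZZ^2$-SFT: its $N\times N$ macro-tiles emulate single tiles, and inside every macro-tile of every level it reserves a computation zone in which a fixed Turing machine runs with time and space growing with the level. Into this hierarchy I would encode a point $y\in X$, displayed prefix by prefix (each level-$n$ macro-tile exhibits a prefix of $y$, coherently across levels and macro-tiles), and have the embedded machine perform two checks. (a) Using effective closedness of $X$, it enumerates the forbidden words of $X$ and rejects whenever the exhibited prefix extends one of them, so that $y\in X$ in any globally valid configuration. (b) For each generator $s$, it reads -- across the inter-sheet connection -- the prefix exhibited by the neighbouring sheet $hs$, recomputes the corresponding prefix of $f_{s^{-1}}(y_h)$ with a Turing machine for the action $f$ (part of the hypothesis on $(X,f)$), and rejects on any discrepancy, so that in a valid configuration the point on each sheet is the appropriate $f$-image of the point on the identity sheet. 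The map $\pi$ onto $(X,f)$ then reads off the point exhibited on the identity sheet: it is continuous (each output coordinate is determined by a finite part of the configuration), $H$-equivariant by construction, and surjective, since for any $y\in X$ the configuration encoding the appropriate $f$-image of $y$ on each sheet passes all checks and hence lies in $X_{\mathrm{SFT}}$.

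The step I expect to be the crux is making the inter-sheet communication coexist with the self-simulating structure in the presence of the twists. An automorphism of $\ZZ^2$ may be a shear, so an $N\times N$ macro-tile of sheet $h$, seen through the connection labelled $s$, lies inside a $\varphi(s)^{-1}$-sheared region of sheet $hs$, which nonetheless must carry the same axis-aligned self-simulating tiling; hence the hierarchies of adjacent sheets cannot be synchronised. The resolution is that only the \emph{content} -- the exhibited prefixes and the outcomes of the comparisons -- needs to cross, not the geometry: since $\varphi(s)^{\pm1}$ ranges over a finite, hard-coded family of matrices, a level-$n$ macro-tile meets only $O(1)$ level-$n$ macro-tiles of each neighbour, so the prefixes it requires are available at a comparable scale, and one routes them through a dedicated communication layer built to absorb this bounded distortion. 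Checking that this layer respects $G$-translation invariance and still yields a finite forbidden set is where most of the bookkeeping lies; the remaining ingredients -- soundness and completeness of the fixed-point tiling with embedded computation, and the limiting encoding of an arbitrary effectively closed $X$ -- are essentially as in the $\ZZ^2$ case, with $H$ entering only through its Cayley graph, so that in particular no decidability of the word problem of $H$ is required for this theorem.
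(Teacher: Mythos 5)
Your reduction to sheets and your identification of the crux are both accurate, but the crux is exactly the point your proposal does not actually resolve, and it is where the paper's key new idea lives. In a $G$-SFT the only inter-sheet channels you have are cell-by-cell: a finite forbidden pattern with support meeting two cosets compares the symbol at a position $u$ of sheet $h$ with symbols at bounded offsets from $\varphi_s(u)$ in the adjacent sheet (in the sheets' intrinsic coordinates). In your fixed-point construction the exhibited prefix of the encoded point lives in designated zones of macro-tiles, and the location of those zones in the neighbouring sheet is dictated by that sheet's own hierarchy, whose phase is unsynchronised with yours, not locally computable, and can even be degenerate (limit/fault configurations of the hierarchy). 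Saying that a level-$n$ macro-tile meets only $O(1)$ level-$n$ macro-tiles of each neighbour is true but does not produce local rules forcing the two encoded points to satisfy $y_{hs}=f_{s}^{\pm1}(y_h)$: you would need either (i) to make the encoded bits available on a family of position sets that is equivariant under the finitely many matrices $\varphi(s)^{\pm1}$, so that the cell-by-cell comparison already carries the content, or (ii) a routing mechanism, coherent across all levels and all relative phases of the two hierarchies simultaneously, bringing the neighbour's prefix to your comparison sites. Option (ii) is precisely the ``dedicated communication layer'' you assert; as written it is a promissory note, and it is also where surjectivity must be re-proved (one must exhibit valid global configurations for every $x\in X$ and every admissible pair of hierarchy phases). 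So the proposal has a genuine gap at its central step. A secondary imprecision: effective closedness of $f$ only lets a machine enumerate cylinders incompatible with $f_s^{-1}([w])$; it does not let you ``recompute the prefix of $f_{s^{-1}}(y_h)$'', so check (b) must be phrased as semi-deciding incompatibility of the two exhibited prefixes (this is fixable and is how the paper handles it in the effectiveness proof of its Toeplitz layer).

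For contrast, the paper takes route (i) and thereby avoids embedded computation and hierarchy synchronisation altogether: for each $v\in(\ZZ/3\ZZ)^2\setminus\{\vec 0\}$ it builds a substitutive layer $\texttt{Sub}_v$ whose level-$m$ lattices $B_m$ have the form $c+3^m v+3^{m+1}\ZZ^2$, so that any $\varphi\in\Aut(\ZZ^2)$ carries the lattice system of $\texttt{Sub}_v$ onto that of $\texttt{Sub}_{\widetilde\varphi(v)}$; the point $x$ and its images $f_s(x)$ are written in Toeplitz sequences whose $n$-th symbol is constant on the projection of $B_n$, and all twisted variants (all $v$, all residues $q$, horizontal and vertical) are superimposed on every coset. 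Then the inter-coset constraint really is a finite set of two-cell patterns (the set $\mathcal{G}$), because the content sits on $\varphi$-equivariant lattices and matches cell-by-cell under the shear; soficity of each $\ZZ^2$-layer is delegated to the Aubrun--Sablik/Durand--Romashchenko--Shen simulation theorem and to Mozes's theorem, and an SFT extension is taken at the end. If you want to salvage your approach, the honest way is to import that idea: keep your per-sheet machinery for enforcing $x\in X$ and the relation between exhibited prefixes, but store the exhibited bits redundantly on mod-$3^m$ lattices (one copy per vector $v$) rather than only in computation zones; at that point you are essentially reconstructing the paper's proof with extra, unneeded Turing-machine layers.
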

	\addtocounter{theorem}{-1}
}

We remark the strong gap which occurs when passing from $\ZZ$-SFTs to the multidimensional case. For instance, $\ZZ$-SFTs contain periodic points, have regular languages and the possible set of entropies they can have is reduced to logarithms of Perron numbers~\cite{lind1995introduction}. In the other hand multidimensional SFTs can be strongly aperiodic~\cite{Berger1966,Robinson1971,Kari1996259,JeandelRao11Wang}, can be composed uniquely of non-computable points~\cite{Hanf1974,Myers1974} and their entropies are not even computable~\cite{HochmanMeyerovitch2010}. Most of these differences can be put into evidence with simulation theorems by the fact that multidimensional SFTs can be projected onto effectively closed subshifts in one dimension. Our Theorem~\ref{simulationTHEOREM} allows analogously to extend properties of effectively closed subshifts in general groups $H$ and show that they also appear in SFTs when the group is replaced by $\ZZ^2 \rtimes H$. This is a powerful tool to construct examples of groups with admit subshifts of finite type with some desired property which is easier to realize in an effective subshift.

Readers who are not familiar with computability or the embedding of Turing machine computations in subshifts of finite type will be reassured by the fact that in the proof all of those aspects are hidden in black boxes. Namely, we use the result of~\cite{AubrunSablik2010,DurandRomashchenkoShen2010} that every effectively closed $\ZZ$-subshift is the projective subdynamics of a sofic $\ZZ^2$-subshift whose vertical shift action is trivial. We also make use of a theorem of Mozes~\cite{mozes} which states that subshifts arising from two-dimensional substitutions are sofic.

In the case when $H$ is a recursively presented group, Theorem~\ref{simulationTHEOREM} can be presented in a purely symbolic dynamics fashion for expansive actions, namely we show:

{
	\renewcommand{\thetheorem}{\ref{theorem.projective}}
	\begin{theorem}
		Let $X$ be an effectively closed $H$-subshift. Then there exists a sofic $(\ZZ^2 \rtimes H)$-subshift $Y$ such that its $H$-projective subdynamics $\pi_H(Y)$ is $X$.
	\end{theorem}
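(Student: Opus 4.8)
The plan is to feed $X$ into Theorem~\ref{simulationTHEOREM} as a suitable effectively closed dynamical system, and then to \emph{collapse} the resulting extension back down to $X$ by recording the factor map inside a subshift of finite type and projecting; the extra flexibility that makes this possible is precisely that $X$ is expansive, so a single letter of $X$ can be stored in a finite alphabet.

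First I would verify that an effectively closed $H$-subshift $X \subseteq A^H$, with $H$ recursively presented, is an $H$-effectively closed dynamical system when equipped with the shift action $\sigma$: fixing a recursive enumeration of $H$ identifies $A^H$ with a Cantor set, the recursive presentation of $H$ makes each shift map $\sigma_h$ computable in this coordinatization, and the defining recursively enumerable family of forbidden patterns exhibits $X$ as an effectively closed subset. Theorem~\ref{simulationTHEOREM} then produces a $(\ZZ^2 \rtimes H)$-SFT $\widetilde X \subseteq \Sigma^{\ZZ^2 \rtimes H}$ together with a surjective, continuous, $H$-equivariant map $\psi$ from the $H$-subaction of $\widetilde X$ onto $(X,\sigma)$.

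The key observation is that $\psi$ is a \emph{block code}: since $\widetilde X$ is a closed subset of $\Sigma^{\ZZ^2 \rtimes H}$ with $\Sigma$ finite, it is compact and totally disconnected, so the continuous map $w \mapsto \psi(w)(1_H)$ is locally constant and hence factors through $w \mapsto w|_F$ for some finite $F \subseteq \ZZ^2 \rtimes H$; by $H$-equivariance there is a fixed local rule $\Psi \colon \Sigma^F \to A$ with $\psi(w)(h) = \Psi(w|_{hF})$ for all $h \in H$. I would then set
\[
  Z \;=\; \bigl\{\, (w,x) \in (\Sigma \times A)^{\ZZ^2 \rtimes H} \;:\; w \in \widetilde X \text{ and } x(g) = \Psi(w|_{gF}) \text{ for all } g \in \ZZ^2 \rtimes H \,\bigr\},
\]
which is a $(\ZZ^2 \rtimes H)$-SFT, being cut out by the finite forbidden-pattern set of $\widetilde X$ together with the local compatibility constraint linking $x(g)$ to $w|_{gF}$. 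Let $Y$ be the image of $Z$ under the letter-to-letter factor map $(w,x) \mapsto x$; then $Y$ is a sofic $(\ZZ^2 \rtimes H)$-subshift. For $(w,x) \in Z$ one has $x|_H = \psi(w) \in X$, and conversely any $x' \in X$ lifts by choosing $w \in \widetilde X$ with $\psi(w) = x'$ and putting $x(g) := \Psi(w|_{gF})$; hence $\pi_H(Y) = \psi(\widetilde X) = X$.

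The main obstacle I anticipate is the first step: rigorously reconciling the combinatorial notion of an effectively closed $H$-subshift with the Cantor-set notion of an $H$-effectively closed dynamical system demanded by Theorem~\ref{simulationTHEOREM}, which is exactly where the recursive presentation of $H$ is needed. All of the genuinely delicate computability and tiling content is already absorbed into Theorem~\ref{simulationTHEOREM}, and the remainder — checking that $\psi$ is a block code, that $Z$ is of finite type, and that restriction to the coset $H$ behaves as claimed — is routine bookkeeping, modulo care with the left/right conventions for the $H$-action on $\ZZ^2 \rtimes H$.
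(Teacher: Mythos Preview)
Your argument is correct, and it is a genuinely different route from the paper's. The paper reopens the construction of Theorem~\ref{simulationTHEOREM}: it works directly with the sofic subshift $\texttt{Final}(\Omega,f')$, isolates the specific layer $\texttt{Layer}^H_{1,1_H}$ carrying the Toeplitz encoding $\Psi_1$, and writes down an explicit window-$3$ local rule $\Phi$ (window $3^k$ for larger alphabets) that reads off $x_0$ from $\Psi_1(x)$; applying this sliding block code yields the sofic $G$-subshift $\texttt{Proj}(\Omega,f')$ whose $H$-projective subdynamics is $Z$. Your approach treats Theorem~\ref{simulationTHEOREM} as a black box: you only use that the $H$-equivariant factor $\psi$ from the SFT $\widetilde X$ to the finite-alphabet subshift $X$ must be a sliding block code (Curtis--Hedlund--Lyndon, via compactness and total disconnectedness), and then you graph that block code over all of $G$ to obtain an SFT whose second-coordinate projection is the desired sofic subshift. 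What your route buys is modularity and a cleaner separation of concerns---nothing about the internal Toeplitz or substitutive structure is needed; what the paper's route buys is an explicit, small local window and a concrete description of the resulting sofic subshift. Both handle the genuinely nontrivial step---turning the effectively closed $H$-subshift into an $H$-effectively closed flow on a Cantor set using the recursive presentation of $H$---in the same way.
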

	\addtocounter{theorem}{-1}
}

It is known that every $\ZZ$-SFT contains a periodic configuration~\cite{lind1995introduction}. However, it was shown by Berger~\cite{Berger1966} that there are $\ZZ^2$-SFTs which are strongly aperiodic, that is, such that the shift acts freely on the set of configurations. This result has been proven several times with different techniques~\cite{Robinson1971,Kari1996259,JeandelRao11Wang} giving a variety of constructions. However, it remains an open question which is the class of groups which admit strongly aperiodic SFTs. Amongst the class of groups that do admit strongly aperiodic SFTs are: $\ZZ^d$ for $d > 1$, hyperbolic surface groups~\cite{CohenGoodmanS2015}, Osin and Ivanov monster groups~\cite{Jeandel2015}, and the direct product $G \times \ZZ$ for a particular class of groups $G$ which includes Thompson's $T$ group and $\text{PSL}(\ZZ,2)$~\cite{Jeandel2015}. It is also known that no group with two or more ends can contain strongly aperiodic SFTs~\cite{Cohen2014} and that recursively presented groups which admit strongly aperiodic SFTs must have decidable word problem~\cite{Jeandel2015}. 

As an application of Theorem~\ref{simulationTHEOREM} we present a new class of groups which admit strongly aperiodic SFTs, that is:

{
	\renewcommand{\thetheorem}{\ref{theoremaperiodic}}
	\begin{theorem}
		Every semidirect product $\ZZ^2 \rtimes H$ where $H$ is finitely generated and has decidable word problem admits a non-empty strongly aperiodic SFT.
	\end{theorem}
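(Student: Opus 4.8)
The plan is to reduce the construction of a strongly aperiodic SFT on $G = \ZZ^2 \rtimes H$ to two ingredients: the simulation theorem (Theorem~\ref{simulationTHEOREM}) applied to a suitable strongly aperiodic effectively closed $H$-system, together with a standard superposition trick that makes the horizontal $\ZZ^2$-direction aperiodic as well. First I would recall that since $H$ has decidable word problem, it is in particular recursively presented, and one can build a concrete effectively closed $H$-dynamical system $(Y,f)$ on a Cantor set on which $H$ acts freely --- for instance, let $Y$ be a suitable closed $H$-invariant subset of $\{0,1\}^H$ with the shift action, obtained as the orbit closure of a point whose stabilizer is trivial, and whose defining forbidden patterns are enumerable precisely because the word problem of $H$ is decidable. (If one prefers, $Y$ can be taken to be a subshift, which additionally makes Theorem~\ref{theorem.projective} available, but the weaker Theorem~\ref{simulationTHEOREM} already suffices for what follows.) The point is that freeness of the $H$-action on $Y$ is easy to arrange, while effectiveness is guaranteed by decidability of the word problem.

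Next I would feed $(Y,f)$ into Theorem~\ref{simulationTHEOREM} to obtain a non-empty $G$-SFT $(X,\sigma)$ whose $H$-subaction factors onto $(Y,f)$ via some $H$-equivariant continuous surjection $\phi\colon X \to Y$. This immediately controls the part of $G$ coming from $H$: if $g = (v,h) \in \ZZ^2 \rtimes H$ fixes a point $x \in X$, then applying $\phi$ and using $H$-equivariance forces $h$ to fix $\phi(x) \in Y$, hence $h = e$ by freeness of the $H$-action on $Y$. So every element stabilizing a configuration of $X$ lies in the normal subgroup $\ZZ^2 \leq G$. It remains to kill those: I would take the product SFT $X' = X \times Z$, where $Z$ is any strongly aperiodic $\ZZ^2$-SFT (Berger~\cite{Berger1966}), regarded as a $G$-SFT by letting $H$ act on the $Z$-coordinate through the conjugation action $\varphi\colon H \to \Aut(\ZZ^2)$ on the shape of $Z$ --- more precisely, one declares a pattern of $X'$ legal when both coordinates are legal, and the finitely many forbidden patterns of $Z$, being $\varphi(H)$-invariant as a family since $\Aut(\ZZ^2) = GL_2(\ZZ)$ is just relabelling the two generators, are sent to finitely many forbidden patterns under each $\varphi(h)$; because $Z$ is strongly aperiodic, the $\ZZ^2$-action on $Z$ is free, so no nontrivial $v \in \ZZ^2$ can stabilize any point of $X'$. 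Combining the two observations, the only element of $G$ fixing a point of $X'$ is the identity, i.e. $X'$ is a non-empty strongly aperiodic $G$-SFT, and $X'$ is non-empty because both $X$ and $Z$ are.

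The main obstacle I anticipate is the bookkeeping in the last step: one has to check carefully that superimposing a $\ZZ^2$-SFT on top of a $\ZZ^2 \rtimes H$-SFT really does yield a $\ZZ^2 \rtimes H$-SFT, i.e. that the $H$-twist acts on the legal patterns of $Z$ by a map that is expressible with finitely many forbidden patterns. This is where the hypothesis $\varphi\colon H \to \Aut(\ZZ^2)$ is used essentially --- an automorphism of $\ZZ^2$ permutes and relabels the coordinate directions, so the finite set of forbidden patterns of $Z$ is closed (up to the $GL_2(\ZZ)$-action, which only affects finitely many ``shapes'' of any bounded support) under the twist, and one can simply forbid every image. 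A mild subtlety is that $\varphi(h)$ may enlarge the support of a forbidden pattern, but only boundedly over the finitely many generators of $H$, so the resulting set of forbidden patterns is still finite; I would spell this out but expect no genuine difficulty. The freeness arguments themselves are purely formal once $(Y,f)$ and $Z$ are in hand.
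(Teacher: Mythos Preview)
Your argument has a real gap in the order of the two ``kill the period'' steps. From $\sigma_{(v,h)}(x)=x$ you write ``applying $\phi$ and using $H$-equivariance forces $h$ to fix $\phi(x)$''. But $\phi$ is only $H$-equivariant, not $G$-equivariant: all Theorem~\ref{simulationTHEOREM} gives you is $\phi\circ\sigma_{(\vec 0,h)}=f_h\circ\phi$. Writing $(v,h)=(v,1_H)(\vec 0,h)$ you obtain $\phi\bigl(\sigma_{(v,1_H)}\sigma_{(\vec 0,h)}(x)\bigr)=\phi(x)$, and to get $f_h(\phi(x))=\phi(x)$ you would need $\phi\circ\sigma_{(v,1_H)}=\phi$, i.e.\ $\ZZ^2$-invariance of the factor map. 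That is in fact true for the specific factor $\Upsilon\circ\pi\circ\phi$ built in the proof of Theorem~\ref{simulationTHEOREM} (shifting a $\Pi(X,f)$-configuration by an element of $\ZZ^2$ does not change the decoded point of $X$), but it is an extra property of the construction, not a consequence of the black-box statement you invoke. Without it, your product trick does not get off the ground: you cannot conclude $h=1_H$ first, and your Berger layer $\widetilde Z$ by itself certainly has periodic points of the form $(v,h)$ with $h\neq 1_H$ (the $\ZZ^2$-cosets of $\widetilde Z$ are unlinked, so one can propagate a single slice along the $\langle(v,h)\rangle$-orbit and stay in $\widetilde Z$).

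The paper avoids this by not superimposing an external aperiodic $\ZZ^2$-SFT at all. It instead reaches inside the construction: the SFT $\widehat{\texttt{Final}}(X,f)$ already carries the substitutive layer $\texttt{Sub}_{(1,1)}$, and one uses the $G$-equivariant projection $\phi_2$ onto that layer. The local rules $\mathcal G$ force the $B_m$-lattices of $\texttt{Sub}_{(1,1)}$ in the $(\ZZ^2,1_H)$-coset to be preserved by $\sigma_{(\vec 0,h)}$, so from $\sigma_{(v,h)}(y)=y$ one first deduces $v=\vec 0$ (a nonzero $\ZZ^2$-shift moves some lattice $B_M$ with $M$ large), and only then applies the $H$-equivariant factor $\phi_1$ onto the free $H$-flow to get $h=1_H$. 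In short, the paper kills the $\ZZ^2$-part first using structure internal to the simulation, whereas you try to kill the $H$-part first using only $H$-equivariance; the latter does not work without the additional $\ZZ^2$-invariance observation. Your discussion of lifting $Z$ to a $G$-SFT is also garbled (the forbidden patterns of a fixed Berger tiling are certainly not $GL_2(\ZZ)$-invariant, and $GL_2(\ZZ)$ is not ``just relabelling two generators''), though that part is salvageable: the naive lift of the $\ZZ^2$-forbidden patterns to supports in $(\ZZ^2,1_H)$ already defines a nonempty $G$-SFT whose restriction to the identity coset lies in $Z$, which is all you need once $h=1_H$ is established.
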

	\addtocounter{theorem}{-1}
}

Amongst this new class of groups which admit strongly aperiodic SFTs, we remark the Heisenberg group which admits a presentation $\mathcal{H} \cong \ZZ^2 \rtimes \ZZ$.

\section{Preliminaries}

Consider a group $G$ and a compact topological space $(X,\mathcal{T})$. The tuple $(X,f)$ where $f: G \times X \to X$ is a left $G$ action by homeomorphisms is called a $G$-flow (or $G$-dynamical system). Let $(X,f)$, $(X',f')$ be two $G$-flows. We say $\phi: X \to X'$ is a \define{morphism} if it is continuous and $\phi \circ f_g = f'_g \circ \phi$ for all $g \in G$. A surjective morphism $\phi : X \twoheadrightarrow X'$ is a \define{factor} and we say that  $(X',f')$ is a \define{factor} of $(X,f)$ and that $(X,f)$ is an \define{extension} of $(X',f')$. When $\phi$ is a bijection and its inverse is continuous we say it is a \define{conjugacy} and that $(X,f)$ is \define{conjugated} to $(X',f')$.

%
%

In what follows, we consider only cantor sets with the product topology and finitely generated groups. Without loss of generality, we consider actions over closed subsets of $\{0,1\}^{\NN}$. Let $G$ be a group generated by a finite set $S$. A \define{$G$-effectively closed flow} is a $G$-flow $(X,f)$ where:

\begin{enumerate}
	\item $X \subset \{0,1\}^{\NN}$ is a closed effective subset: $X = \{0,1\}^{\NN} \setminus \bigcup_{i \in I}{[w_i]}$ where $\{w_i\}_{i \in I} \subset \{0,1\}^*$ is a recursively enumerable language. That means that $X$ is the complement of a union of cylinders which can be enumerated by a Turing machine.
	\item $f$ is an effectively closed action: there exists a Turing machine which on entry $s \in S$ and $w \in \{0,1\}^*$ enumerates a sequence of words $(w_j)_{j \in J}$ such that $f_{s}^{-1}([w]) = \{0,1\}^{\NN} \setminus \bigcup_{j \in J}[w_j]$.
\end{enumerate}

The idea behind the definition is the following: There is a Turing machine $T$ which given a word $g \in S^*$ representing an element of $G$ and $n$ coordinates of $x \in X \subset \{0,1\}^{\NN}$ returns an approximation of the preimage of $x$ by $f_g$.

Let $\ag$ be a finite alphabet and $G$ a finitely generated group. The set $\ag^G = \{ x: G \to \ag\}$ equipped with the left group action $\sigma: G \times \ag^G \to \ag^G$ given by: 
$(\sigma_{h}(x))_g = x_{h^{-1}g}$ is the \textit{$G$-full shift}. The elements $a \in \ag$ and $x \in \ag^G$ are called \define{symbols} and \define{configurations} respectively. We endow $\ag^G$ with the product topology, therefore obtaining a compact metric space. The topology is generated by the metric $\displaystyle{d(x,y) = 2^{-\inf\{|g|\; \mid\; g \in G:\; x_g \neq y_g\}}}$ where $|g|$ is the length of the smallest expression of $g$ as the product of some fixed set of generators. This topology is also generated by a clopen basis given by the \define{cylinders} $[a]_g = \{x \in \ag^G | x_g = a\in \ag\}$. A \emph{support} is a finite subset $F \subset G$. Given a support $F$, a \emph{pattern with support $F$} is an element $P$ of $\ag^F$, i.e. a finite configuration and we write $supp(P) = F$. We also denote the cylinder generated by $P$ centered in $g$ as $[P]_g = \bigcap_{h \in F}[P_h]_{gh}$. If $x \in [P]_g$ for some $g \in G$ we write $P \sqsubset x$.

A subset $X$ of $\ag^G$ is a \define{$G$-subshift} if it is $\sigma$-invariant -- $\sigma(X)\subset X$ -- and closed for the cylinder topology. Equivalently, $X$ is a $G$-subshift if and only if there exists a set of forbidden patterns $\FF$ that defines it.
$$X=X_\FF :=  {\ag^G \setminus \bigcup_{P \in \FF, g \in G} [P]_g}.$$

That is, a $G$-subshift is a subset of $\ag^G$ which can be written as the complement of a union of cylinders.

If the context is clear enough, we will drop the $G$ and simply refer to a subshift. A subshift $X\subseteq \ag^G$ is \define{of finite type} -- SFT for short -- if there exists a finite set of forbidden patterns $\FF$ such that $X=X_\FF$. A subshift $X\subseteq \ag^G$ is \define{sofic} if there exists a subshift of finite type $Y \subset \ag'^G$ and a factor $\phi: Y \twoheadrightarrow X$. A subshift is effectively closed if there exists a recursively enumerable coding of a set of forbidden patterns $\FF$ such that $X=X_\FF$. More details can be found in~\cite{ABS2014} or in Section~\ref{section.consequences}. 

Any $G$-flow over a cantor set can be seen as a subshift over an infinite alphabet: Indeed, $(X,f)$ can be seen as $Y \subset (\{0,1\}^{\NN})^G$ equipped with the shift action such that $x \in Y$ if and only if $\forall g \in G$ $x_g = f_g(x_{1_G})$. In this setting, effectively closed $G$-flows correspond to effectively closed subshifts in this infinite alphabet.

Let $H \leq G$ be a subgroup and $(X,f)$ a $G$-flow. The \emph{$H$-subaction} of $(X,f)$ is $(X,f^H)$ where $f^H : H \times X \to X$ is the restriction of $f$ to $H$, that is $\forall h \in H, (f^H)_h(x) = f_h(x)$. In the case of a subshift $X \subset \ag^G$ there is also the different notion of projective subdynamics. The \emph{$H$-projective subdynamics} of $X$ is the set $\pi_H(X) = \{ y \in \ag^H \mid \exists x \in X, \forall h \in H, y_h = x_h \}$. 

\section{Simulation Theorem}
\label{section.generalities}

The purpose of this section is to prove our main result.

\begin{theorem}\label{simulationTHEOREM}
	Let $H$ be finitely generated group and $G = \ZZ^2 \rtimes H$. For every $H$-effectively closed flow $(X,f)$ there exists a $G$-SFT whose $H$-subaction is an extension of $(X,f)$.
\end{theorem}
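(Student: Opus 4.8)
The plan is to transport Hochman's simulation construction to the group $G=\ZZ^2\rtimes_\varphi H$, using the normal copy of $\ZZ^2$ as the two auxiliary dimensions that host a self-similar computing device. I would partition $G$ into the left cosets of $\ZZ^2$, naturally indexed by $H$ and which I call \emph{sheets}, and build a $G$-SFT $\widehat X$ over a finite alphabet with three superposed layers: a \emph{data layer}, in which the restriction to the sheet labelled $h$ encodes the binary digits of a point $p_h\in\{0,1\}^\NN$; a \emph{hierarchical layer}, a Robinson-type self-similar structure present on each sheet that supplies unbounded space and time to a Turing machine; and a \emph{computation layer} running that machine. The machine is programmed to enumerate the forbidden words defining $X$ together with the obstructions to the relations $p_{hs}=f_{s^{-1}}(p_h)$ for the finitely many generators $s\in S$ of $H$, and to create a local contradiction whenever one is detected; since $X$ is effective and $f$ is an effectively closed action, the collection of such obstructions is recursively enumerable, so this is a legitimate use of an embedded machine. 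The factor map $\psi\colon\widehat X\twoheadrightarrow X$ will be ``read $p_{1_H}$ off the identity sheet''.

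To realise the hierarchical and computation layers with only finitely many local rules I would avoid reasoning with Turing machines directly and instead invoke the available black boxes. First, repackage the effective data of $(X,f)$ as an effectively closed $\ZZ$-subshift $Z$ whose bi-infinite configurations carry a point of $X$ together with its images under the generators in $S$, the membership and consistency obstructions forming its recursively enumerable forbidden language. By the theorem of~\cite{AubrunSablik2010,DurandRomashchenkoShen2010}, $Z$ is the projective subdynamics of a sofic $\ZZ^2$-subshift with trivial vertical action, hence the factor of a $\ZZ^2$-SFT $W$; Mozes' theorem~\cite{mozes} supplies the substitutive self-similar skeleton that makes the hierarchical part of $W$ itself of finite type. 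Because Theorem~\ref{simulationTHEOREM} only asks for the $H$-subaction to be an \emph{extension} of $(X,f)$, carrying the extra SFT cover-coordinates of $W$ inside $\widehat X$ is harmless, since a composition of factor maps is again a factor map; the same remark reduces non-emptiness of $\widehat X$ to non-emptiness of $W$ together with the existence of one $f$-orbit, which holds automatically.

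The hard part, and the feature genuinely absent from Hochman's $\ZZ^{d+2}$ setting~\cite{Hochman2009b}, is the semidirect twist. A forbidden pattern with support in the normal subgroup $\ZZ^2$, when it constrains the sheet labelled $h$, does so through the automorphism $\varphi(h)$, so that sheet carries the Cayley graph of $\ZZ^2$ in the basis $\{\varphi(h)e_1,\varphi(h)e_2\}$, which varies over the possibly infinite subgroup $\varphi(H)\leq\Aut(\ZZ^2)$ from sheet to sheet; symmetrically, $\xi\mapsto\sigma_h(\xi)$ relabels sheet coordinates by $\varphi(h^{-1})$, so reading $p_{1_H}$ off a fixed ray of the identity sheet is not $H$-equivariant. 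Handling this coherently is the core of the argument: one must arrange the hierarchical and data layers so that they are stable under the finitely many coordinate changes $\{\varphi(s):s\in S\}$ — equivalently, so that the self-similar structure and the digit-encoding on each sheet are read in an intrinsically $\sigma$-equivariant way rather than relative to a fixed basis — and one must check that the finitely many cross-sheet matching rules remain finitely describable after being transported by the $\varphi(s)$. I expect to spend most of the effort here; once the twist is under control, continuity and $H$-equivariance of $\psi$, the fact that $p_{1_H}\in X$ because the embedded machine never halts, and surjectivity of $\psi$ all follow by routine checks, completing the proof.
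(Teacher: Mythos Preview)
Your architecture is essentially the paper's: partition $G$ into $\ZZ^2$-sheets indexed by $H$, put on each sheet a self-similar hierarchical layer plus a layer encoding a point of $X$ together with its images under the generators, realise the effective constraints via the Aubrun--Sablik/DRS black box, and glue neighbouring sheets with finitely many local rules that enforce $p_{hs}=f_{s^{-1}}(p_h)$. You also correctly isolate the genuine difficulty, namely the twist by $\varphi$.

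The gap is that you stop exactly at that difficulty. Saying that the hierarchical and data layers must be ``stable under the finitely many coordinate changes $\{\varphi(s):s\in S\}$'' is a statement of the problem, not a solution, and the phrasing is slightly misleading: although only the generators appear in the cross-sheet rules, the \emph{same} finite $\ZZ^2$-rule set must cut out a valid hierarchical structure on every sheet, and the sheets are related to the identity sheet by the whole (possibly infinite) subgroup $\varphi(H)\le\Aut(\ZZ^2)$. A single Robinson-type tiling does not behave well under general elements of $GL(2,\ZZ)$: its hierarchy of marked lattices is tied to the standard basis, so an automorphism typically sends it to something that is no longer a Robinson configuration, and there is no obvious way to match its lattices across a twisted sheet boundary with a finite rule.

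The paper's mechanism for this is the actual content of the proof. Instead of one hierarchical structure it uses a \emph{finite family} of substitutions $\texttt{s}_v$, one for each nonzero $v\in(\ZZ/3\ZZ)^2$, and puts all eight substitutive layers on every sheet. The point is that any $\varphi\in\Aut(\ZZ^2)$ reduces to an element $\widetilde\varphi\in\Aut((\ZZ/3\ZZ)^2)$, and the level-$m$ lattice of $\texttt{Sub}_v$ is sent by $\varphi$ to a level-$m$ lattice of $\texttt{Sub}_{\widetilde\varphi(v)}$. Thus the twist merely permutes the finitely many substitutive layers, and the cross-sheet rule ``a $\tileb$ in layer $\texttt{Sub}_{\widetilde\varphi_{s^{-1}}(1,1)}$ on this sheet must sit over a $\tileb$ in layer $\texttt{Sub}_{(1,1)}$ on the $s^{-1}$-sheet'' is genuinely local. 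The data layer is designed to match this: the Toeplitz encodings $\Psi_q$ are indexed by $q\in\{1,2\}$ and carried in both a horizontal and a vertical extension, so that whichever substitutive layer one lands in after the twist, its lattices project to the correct arithmetic progression in some $\Psi_q$. Without this (or an equivalent device that makes the hierarchical lattices transform \emph{predictably and finitely} under all of $\Aut(\ZZ^2)$), the ``routine checks'' you defer to cannot even be formulated.
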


We begin by introducing some general constructions. The general schema of the proof is the following: First we construct for each non-zero vector $v \in (\ZZ/3\ZZ)^2$ a substitution $\texttt{s}_v$ which encodes countable copies of $\ZZ^2$ as lattices with the property that any automorphism $\varphi \in \Aut(\ZZ^2)$ sends each of the lattices of $\texttt{s}_v$ to those of $\texttt{s}_{\tilde{\varphi}(v)}$ where $\tilde{\varphi} \in \Aut((\ZZ/3\ZZ)^2)$ is the automorphism obtained by projecting each component to $\ZZ/3\ZZ$. This structure allows us to pair lattices of $\ZZ^2$ when moving in $G$ by elements of $H$.

Then we encode the elements of $X$ and the $H$-flow $f$ in an effective Toeplitz $\ZZ$-subshift. We do so in a way that the projections of the $n$-th order lattice to the line in the previous construction always matches with the symbol $x_n$. For technical reasons of matching all possible lattices, we do this coding in two different ways.

Afterwards, we extend the Toeplitz subshift to a $\ZZ^2$-subshift by repeating rows (or columns). Using a known simulation theorem we obtain that this object is a sofic $\ZZ^2$-subshift from which we extract an SFT extension.

Finally, we extend this construction to $G$ by adding local rules that ensure that if a $\ZZ^2$-coset codes the point $x \in X$ then the coset of $\ZZ^2$ given by the action of $h \in H$ codes $f_h(x)$. This set of rules is coded as a finite amount of forbidden patterns.

Finally, we define the factor code, and show that it satisfies the required properties.

\subsection{A substitution which encodes an action of $\Aut((\ZZ/p\ZZ)^2)$.}
\label{section.substitution}

Let $p \in \NN$. We define a substitution over a two symbol alphabet which generates a sofic $\ZZ^2$-subshift encoding translations of $p^{m+1}\ZZ^2$ for $m \in \NN$. In the proof of the simulation theorem we will only use the case where $p = 3$, but we prefer to proceed here with more generality.


Let $v \in (\ZZ/ p\ZZ)^2 \setminus \{(0,0)\}$ and $\ag = \{ \tilea, \tileb \}$. The $\ZZ^2$-substitution $\texttt{s}_v : \ag \to \ag^{\{0, \dots, p-1\}^2}$ is defined by:

$$\texttt{s}_v(\tilea)_{z} = \begin{cases}
\tileb \mbox{\ \ \  if } z= v   \\ \tilea  \mbox{ \ \ \ \ in the contrary case. }
\end{cases}$$
$$\texttt{s}_v(\tileb)_{z} = \begin{cases}
\tileb \mbox{\ \ \  if } z \in \{(0,0),v\}   \\ \tilea  \mbox{ \ \ \ \ in the contrary case. } \end{cases}$$

As an example, if $p = 3$ and $v = (1,1)$ we get the following:

\begin{center}
\begin{tikzpicture}
\node at (-1.2,0) {$\texttt{s}_v(\tilea) =$};
\begin{scope}[scale = 0.3, shift = {(-1,-1.5)}]
\draw[black] (0,0)grid(+3,+3);
\fill[color = black] (1,1) rectangle ++(1,1);
\end{scope}
\node at (1.2,0) { };
\end{tikzpicture}\begin{tikzpicture}
\node at (-1.2,0) {$\texttt{s}_v(\tileb) =$};
\begin{scope}[scale = 0.3, shift = {(-1,-1.5)}]
\draw[black] (0,0)grid(+3,+3);
\fill[color = black] (0,0) rectangle ++(1,1);
\fill[color = black] (1,1) rectangle ++(1,1);
\end{scope}\end{tikzpicture}
\end{center}

In this example we obtain that the patterns $\texttt{s}_v^3(\tileb)$ and  $\texttt{s}_v^4(\tileb)$ are:

\begin{figure}[h!]
	\centering
	\include{subst}
\end{figure}

To a substitution $\texttt{s}_v$ we associate the subshift ${\texttt{Sub}}_v$ defined as the set of $\ZZ^2$-configurations such that every subpattern appears in some iteration of the substitution $\texttt{s}_v$. $$\texttt{Sub}_v = \{ x \in \{\tilea, \tileb\}^{\ZZ^2} \mid \forall P \sqsubset x, \exists n \in \NN : P \sqsubset \texttt{s}_v^n(\tileb) \}$$

We remark the following properties of these objects:
\begin{enumerate}
	\item $s_v$ is a primitive substitution, and thus ${\texttt{Sub}}_v$ is a minimal subshift.
	\item $\forall a \in \{\tilea, \tileb\}$, $n \in \NN$, $\texttt{s}_v^{n+1}(a)_{p^{n}v} = \tileb$. 
	\item By Mozes Theorem~\cite{mozes} $\texttt{Sub}_v$ is a $\ZZ^2$-sofic subshift.  
	\item $\texttt{s}_v$ has unique derivation. This implies by Mozes's results that there is an almost 1-1 SFT extension for $\texttt{Sub}_v$.
	\item Putting together the unique derivation and the second property we obtain the following: $\forall z \in \texttt{Sub}_v$ and $\forall n \in \NN$ there exists a unique $(i_n,j_n) \in P_n := [0,p^{n+1}-1]^2 \cap \ZZ^2$ such that $\forall m \leq n$ then $(i_n,j_n) +p^{m}v+p^{m+1}\ZZ^2$ is composed completely of black squares. We denote each of these sets by $B_m(z)$ -- The lattice of black squares of level $m$. These sets are all disjoint and cover every black square in $\texttt{Sub}_v$ with the possible exception of at most one. We denote this degenerated lattice by $B_{\infty}(z)$, and note that it can either be empty or contain a single position.
	\item Let $z \in \texttt{Sub}_v$ and $\varphi \in \Aut(\ZZ^2)$. We can identify $\varphi$ as an invertible matrix $A_{\varphi} \in GL(\ZZ,2)$ and construct $A_{\widetilde{\varphi}} \in \mathcal{M}( \ZZ/p\ZZ,2)$ by reducing every entry of this matrix modulo $p$. As $\det(A_{\varphi}) \in \{-1,1\}$ then $\det(A_{\widetilde{\varphi}}) \in \{1,p-1\}$. Therefore $A_{\widetilde{\varphi}} \in GL(\ZZ/p\ZZ,2)$ and it is identified as an automorphism $\widetilde{\varphi} \in \Aut((\ZZ/p\ZZ)^2)$.
	
	With this in mind, we obtain the following relation: we have that for $m \leq n$ then $B_m(z) =  (i_n,j_n) +p^{m}v+p^{m+1}\ZZ^2$. Therefore:
	\begin{align*}
	\varphi( B_m(z) ) & = \varphi((i_n,j_n))+ p^{m} \varphi(v)+p^{m+1} \varphi(\ZZ^2) \\
	& = \varphi((i_n,j_n))+ p^{m}A_{\varphi}(v) +p^{m+1} \ZZ^2 \\
	& = \varphi((i_n,j_n))+ p^{m}(A_{\widetilde{\varphi}}  + p( \frac{A_{\varphi}-A_{\widetilde{\varphi}}}{p})  (v) +p^{m+1} \ZZ^2 \\
	& = \varphi((i_n,j_n))+ p^{m}A_{\widetilde{\varphi}}(v) + p^{m+1} (( \frac{A_{\varphi}-A_{\widetilde{\varphi}}}{p})(v)+ \ZZ^2) \\
	& = \varphi((i_n,j_n))+ p^{m}\widetilde{\varphi}(v) +p^{m+1} \ZZ^2 \\
	\end{align*} 
	This means that for fixed $n$ all lattices of size $m \leq n$ are sent to lattices appearing in $\texttt{Sub}_{\widetilde{\varphi}(v)}$. Making $n$ go to infinity and reasoning by compactness we conclude that $\forall z \in \texttt{Sub}_{v}$ there exists $z' \in \texttt{Sub}_{\widetilde{\varphi}(v)}$ such that the image of $(B_m(z))_{m \in \NN}$ under $\varphi$ is $(B_m(z'))_{m \in \NN}$.
\end{enumerate}

We shall use these lattices to encode elements of $\{0,1\}^{\NN}$ belonging to our $H$-flow $(X,f)$. In order to do this, we need to define a subshift which forces to match these lattices to actual values from $X$ and to code the action of $f$. 

\subsection{Encoding configurations in Toeplitz sequences.}

Consider $p \geq 3, q \in \{1,\dots,p-1\}$ and the application $\Psi_q : \{0,1\}^{\NN} \to \{0,1,\$\}^{\ZZ}$ given by:

$$ \Psi_q(x)_j = \begin{cases}
x_n \mbox{\ \ \  if } j = qp^n \mod{p^{n+1}} \\ \$  \mbox{ \ \ \ \ in the contrary case. }
\end{cases} $$

The idea behind this encoding is to match for each $m \in \NN$ the projection of the lattice $B_m(x)$ to the symbol $x_m$. We need to do this for every possible choice of $q$ as the projections of the lattice associated to $v = (1,1)$ are different than the ones for $v = (2,2)$ for example.

Every configuration $x \in \{0,1\}^{\NN}$ is encoded in a Toeplitz sequence $\Psi_q(x)$. We begin this section by studying the structure of $\Psi_q(x)$. 

First notice that $\Psi_q(x)|_{q+p\ZZ} \equiv x_0$ and $\forall q' \in \{1,\dots p-1\}\setminus \{q\}$ we have that $\Psi_q(x)_{q'+p\ZZ} \equiv \$ $. Indeed, as $q'+pk \neq 0 \mod{p}$ thus $q'+pk \neq p^i \mod{p^{i+1}}$. Also, if $i \geq 1$ and $\Psi_q(x)_j = x_i$ then $\Psi_q(x)_{j+q} = x_0$ as $j = p^i \mod{p^{i+1}} \implies j = 0 \mod p$. This means that every $x_0$ is a special coordinate in a string of $p-1$ symbols where every other symbol is $\$ $ and every $x_i$ with $i \geq 1$ is necessarily followed by such string. As $p \geq 3$ the lattice of $x_0$ can be recognized as they are the only symbols which are preceded by $q-1$ symbols $\$$ and followed by $p-q-1$ symbols $\$$ and at least one of these two values is positive.

For $x = (x_i)_{i \in \NN} \in \{0,1\}^{\NN}$ let $\sigma(x) \in \{0,1\}^{\NN}$ be defined by $\sigma(x)_i=x_{i+1}$ (we shall use the same notation as in the case of the group shift action, though in this case it's a one-sided $\NN$-action). We define also for $k \in \{0,\dots,p-1\}$ the transformation $\Omega_{k} : \{0,1,\$\}^{\ZZ} \to \{0,1,\$\}^{\ZZ}$ by $(\Omega_{k}(y))_j = y_{jp+k}$. 

\begin{proposition}\label{propositionOMEGA}
	 Let $x \in \{0,1\}^{\NN}$ and $y \in \overline{\text{Orb}_{\sigma}(\Psi_q(x))}$. There exists a unique $k_0 \in \{0,\dots p-1\}$ such that: $$\Omega_{k_0}(y)  \in  \overline{\Orb_{\sigma}(\Psi_q(\sigma(x)))}.$$
\end{proposition}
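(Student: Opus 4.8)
The argument turns on a single arithmetic identity: $\Omega_0 \circ \Psi_q = \Psi_q \circ \sigma$. I would verify it by a direct computation with $p$-adic valuations (write $v_p$ for the valuation): for $j \neq 0$ both $\Omega_0(\Psi_q(x))_j$ and $\Psi_q(\sigma(x))_j$ equal $x_{v_p(j)+1}$ when $j\,p^{-v_p(j)} \equiv q \pmod p$ and equal $\$$ otherwise, while both equal $\$$ at $j = 0$. A routine re-indexing then gives the general relation $\Omega_k \circ \sigma^t = \sigma^{s} \circ \Omega_{k'}$, where $k' \in \{0,\dots,p-1\}$ is the residue of $k-t$ modulo $p$ and $s = (k-t-k')/p \in \ZZ$. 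Applying this to $\Psi_q(x)$ and using the identity, one obtains that $\Omega_k(\sigma^t(\Psi_q(x)))$ is a $\sigma$-translate of $\Psi_q(\sigma(x))$ when $k \equiv t \pmod p$, whereas it is a constant configuration --- all $\$$, or all equal to $x_0$ --- when $k \not\equiv t \pmod p$, since then every coordinate $jp+k-t$ has $p$-adic valuation $0$ and residue $k-t$ modulo $p$.

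The plan is then to isolate the structural fact that drives uniqueness. Because $p \geq 3$, every block of $p$ consecutive coordinates of $\Psi_q(\sigma(x))$ contains a coordinate $\equiv q \pmod p$, which carries the value $x_1 \in \{0,1\}$, and a coordinate in a fixed residue class outside $\{0,q\}$, which carries $\$$. This is a closed condition --- a finite list of forbidden length-$p$ patterns --- so it is inherited by every $z \in \overline{\Orb_\sigma(\Psi_q(\sigma(x)))}$. In particular this orbit closure contains no constant configuration, and if $\Omega_k(y) \in \overline{\Orb_\sigma(\Psi_q(\sigma(x)))}$ then the progression $k + p\ZZ$ meets both the $\$$-coordinates and the non-$\$$-coordinates of $y$.

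Now take $y \in \overline{\Orb_\sigma(\Psi_q(x))}$, write $y = \lim_m \sigma^{t_m}(\Psi_q(x))$, and --- since $t_m \bmod p$ takes finitely many values --- pass to a subsequence with $t_m \equiv r \pmod p$ for a fixed $r$. Along it, by the first step, $\sigma^{t_m}(\Psi_q(x))$ is constantly $\$$ on each class $k + p\ZZ$ with $k \not\equiv r, r+q \pmod p$, is constantly $x_0$ on the class $(r+q)+p\ZZ$, and restricts through $\Omega_r$ to a $\sigma$-translate of $\Psi_q(\sigma(x))$ on the class $r+p\ZZ$. Passing to the coordinatewise limit, $y$ inherits this coarse shape: the $p-2 \geq 1$ classes other than $r$ and $r+q$ are purely $\$$, the class $r+q$ is purely $x_0$, and $\Omega_r(y) = \lim_m \Omega_r(\sigma^{t_m}(\Psi_q(x))) \in \overline{\Orb_\sigma(\Psi_q(\sigma(x)))}$, which by the window property carries both $\$$ and non-$\$$ values on $r + p\ZZ$. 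Since $x_0 \in \{0,1\}$ is not $\$$, the class $r$ is thus the unique residue class modulo $p$ on which $y$ takes both a $\$$ and a non-$\$$ value; this characterizes $r$ intrinsically, so I set $k_0 := r$. Existence then holds since $\Omega_{k_0}(y) = \Omega_r(y) \in \overline{\Orb_\sigma(\Psi_q(\sigma(x)))}$, and for uniqueness: if $\Omega_k(y) \in \overline{\Orb_\sigma(\Psi_q(\sigma(x)))}$, then $k+p\ZZ$ meets both the $\$$- and non-$\$$-coordinates of $y$, which by the previous sentence forces $k = r = k_0$.

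The delicate point is the transfer of this coarse structure through the coordinatewise limit in the third step --- one must be sure the partition of $\ZZ/p\ZZ$ into the purely-$\$$ classes, the purely-$x_0$ class, and the single mixed class is not disturbed by passing to the limit. It rests entirely on the window property, hence on $p \geq 3$: when $p = 2$ (so $q = 1$) there is no residue class outside $\{0,q\}$, $\Psi_q(\sigma(x))$ has a $\$$ only at the origin, its orbit closure contains $\$$-free configurations, and the dichotomy collapses. Routing the proof through this window property is also what lets me avoid a full description of $\overline{\Orb_\sigma(\Psi_q(x))}$, which additionally contains ``healed-defect'' configurations when $x$ is eventually constant; these are harmless here but would only clutter a direct classification.
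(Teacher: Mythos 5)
Your proof is correct and follows essentially the same route as the paper's: write $y$ as a limit of shifts of $\Psi_q(x)$ whose shift amounts have a constant residue $r$ modulo $p$, use the intertwining identity $\Omega_0\circ\Psi_q=\Psi_q\circ\sigma$, and conclude by continuity of $\Omega_r$. Your explicit uniqueness step (the unique ``mixed'' residue class, via the window property forced by $p\geq 3$) is in fact spelled out more carefully than in the paper, which locates $k_0$ through the all-$x_0$ class and leaves uniqueness largely implicit.
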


\begin{proof}
	
The application $\Omega_{k}$ is clearly continuous in the product topology as fixing $y$ in the interval $\ZZ\cap [-lp,lp-1]$ for $l \geq 1$ necessarily fixes $\Omega_k(y)$ in the interval $\ZZ\cap [-l,l-1]$. 

Let $y \in \overline{\Orb_{\sigma}(\Psi_q(x))}$. As $\Psi_q(x)|_{q+p\ZZ} \equiv x_0$ we can deduce by compactness that there exists $k' \in \{1,\dots,p\}$ such that $y|_{k'+p\ZZ} \equiv x_0$. Using the argument that every $x_0$ is in a string of $p-1$ symbols which repeats recurrently we can choose $k_0 := k'-q \mod p$ which satisfies that $x_{k_0+1,\dots k_0+p-1}$ is this string. Consider a sequence $(\sigma_{z_i}(\Psi_q(x)))_{i \in \NN} \to y$. Without loss of generality we can ask that $z_i \in p\ZZ-k_0$, if not it suffices to eliminate a finite number of terms. We get that 	\begin{align*}
\Omega_{k_0}(\sigma_{k_0+pl}(\Psi_q(x)))&= \Omega_{0}(\sigma_{pl}(\Psi_q(x))) \\
&= \sigma_{l}\Omega_{0}(\Psi_q(x)) \\
&= \sigma_l(\Psi_q(\sigma(x))) \in \Orb(\Psi_q(\sigma(x)))
\end{align*}

As $\Omega_k$ is continuous, we obtain that $\Omega_{k_0}(y) \in \overline{\Orb_{\sigma}(\Psi_q(\sigma(x)))}$. 
\end{proof}

\begin{example*}
	For $p = 3$, $q = 1$ and $x = x_0x_1x_2\dots$ we obtain that:
	
	$$\Psi_q(x)|_{\{0,\dots,30\}} = \$x_0\$x_1x_0\$\$x_0\$x_2x_0\$x_1x_0\$\$x_0\$\$x_0\$x_1x_0\$\$x_0\$x_3x_0\$x_1 $$
	$$\Omega_0(\Psi_q(x))|_{\{0,\dots,10\}} = \$x_1\$x_2x_1\$\$x_1\$x_3x_1 = \Psi_q(\sigma(x))|_{\{0,\dots,10\}} $$
	$$\Omega_0^2(\Psi_q(x))|_{\{0,\dots,3\}} = \$x_2\$x_3 = \Psi_q(\sigma^2(x))|_{\{0,\dots,3\}} $$
\end{example*}

The previous proposition actually shows that $x$ can be decoded from any element of the closure of the orbit of $\Psi_q(x)$ under the shift action. That necessarily implies that the orbits are disjoint.

\begin{proposition}\label{propositionorbitsaredisjoint}
	Let $x,x' \in X$. If $x \neq x'$ then $\overline{\Orb_{\sigma}(\Psi_q(x))} \cap \overline{\Orb_{\sigma}(\Psi_q(x'))} = \emptyset$
\end{proposition}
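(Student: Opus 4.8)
The plan is to argue by contraposition: assuming $\overline{\Orb_{\sigma}(\Psi_q(x))}\cap\overline{\Orb_{\sigma}(\Psi_q(x'))}\neq\emptyset$, I will show $x=x'$. The key point, already announced after Proposition~\ref{propositionOMEGA}, is that $x$ can be \emph{decoded} from any $y\in\overline{\Orb_{\sigma}(\Psi_q(x))}$ by a procedure depending on $y$ alone; applying it to a common point $y$ then gives $x=x'$ directly.

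First I would extract $x_0$ from $y$. Consider the positions $j\in\ZZ$ obeying the local recognition rule described before Proposition~\ref{propositionOMEGA}, namely being preceded by $q-1$ symbols $\$ $ and followed by $p-q-1$ symbols $\$ $, which is a nontrivial requirement since $p\geq 3$. In $\Psi_q(x)$ this set of positions is exactly the coset $q+p\ZZ$, on which $\Psi_q(x)$ is constant equal to $x_0$. Because the rule is local, for a sequence $\sigma_{z_i}(\Psi_q(x))\to y$ along which $z_i$ is eventually constant modulo $p$ (pass to a subsequence), the recognized positions of $y$ inside any finite window coincide with those of the approximants; hence they form a single coset $k'+p\ZZ$, and the value of $y$ on that coset is a limit of the constant $x_0$, so it equals $x_0$. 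Neither $k'$ nor the recovered symbol refers to $x$ beyond $y$ itself.

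Next I would iterate using $\Omega$. By Proposition~\ref{propositionOMEGA} there is a unique $k_0\in\{0,\dots,p-1\}$, in fact $k_0 = k'-q \mod p$ and hence again a function of $y$ only, with $\Omega_{k_0}(y)\in\overline{\Orb_{\sigma}(\Psi_q(\sigma(x)))}$. Applying the previous step to $\Omega_{k_0}(y)$ recovers $\sigma(x)_0=x_1$, and inductively one obtains configurations $y=y^{(0)}$, $y^{(n+1)}=\Omega_{k_n}(y^{(n)})\in\overline{\Orb_{\sigma}(\Psi_q(\sigma^{n}(x)))}$, together with the symbol $x_n$ read off each $y^{(n)}$. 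Every choice in this process uses only the current configuration, so $y\mapsto(x_n)_{n\in\NN}=x$ is a well-defined map on $\overline{\Orb_{\sigma}(\Psi_q(x))}$ that recovers $x$. Running it on a point $y\in\overline{\Orb_{\sigma}(\Psi_q(x))}\cap\overline{\Orb_{\sigma}(\Psi_q(x'))}$ yields $x$ from the first membership and $x'$ from the second, whence $x=x'$.

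The step I expect to be the main obstacle is the first one: carrying the recognition rule through the limit, that is, verifying that the recognized positions of $y$ form \emph{exactly} one full coset modulo $p$ and that $y$ is constant equal to $x_0$ there. This relies on the recognized positions in $\Psi_q(x)$ being precisely $q+p\ZZ$, with no spurious positions coming from the symbols $x_i$ with $i\geq 1$ (this is where $p\geq 3$ is used), and on a compactness argument along a subsequence on which the $z_i$ modulo $p$ converge. Once this is settled, the remainder is a short induction on top of Proposition~\ref{propositionOMEGA}.
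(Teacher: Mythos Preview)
Your proposal is correct and follows essentially the same route as the paper: take a common point $y$, use the uniqueness of $k_0$ from Proposition~\ref{propositionOMEGA} (which, as you note, depends only on $y$) to read off $x_0=x'_0$, then iterate with $\Omega_{k_0}$ to conclude $x_i=x'_i$ for all $i$. The step you flag as the main obstacle --- that the recognized positions in $y$ form a single coset modulo $p$ carrying the constant value $x_0$ --- is exactly the compactness argument already carried out inside the proof of Proposition~\ref{propositionOMEGA}, so you may simply invoke it rather than redo it.
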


\begin{proof}
	
	Let $y \in \overline{\Orb_{\sigma}(\Psi_q(x))} \cap \overline{\Orb_{\sigma}(\Psi_q(x'))}$. Using Proposition~\ref{propositionOMEGA} we can find $k_0$ such that $\Omega_{k_0}(y)  \in  \overline{\Orb_{\sigma}(\Psi_q(\sigma(x)))}$. Moreover, we get that $x_0 = x'_0 = y_{k_0}$. Iterating this procedure we obtain that $\forall i \in \NN$ then $x_i = x'_i$ and thus $x = x'$.
\end{proof}


Before continuing, let's draw the attention to the structure of the subshift $\overline{\Orb_{\sigma}(\Psi_q(x))}$. Every element here encodes the structure of $x$ by repeating its $n$-th coordinate in gaps of size $p^{n+1}$. Therefore, every non $\$$ element appears periodically with at most one exception -- a position obtained by compactness -- which we denote by $x_{\infty}$. This point may take any value if both $0$ and $1$ appear infinitely often in $x$ but is restricted if $x$ is eventually constant. This point is analogous to the lattice $B_{\infty}(z)$ appearing in the substitution we defined before.

Let $(X,f)$ be an $H$-flow and $p \geq 3$. We use the encoding $\Psi_q$ defined above to construct a $\ZZ$-subshift $\texttt{Top}(X,f)$ which encodes the points of $x$ and the action of $f$ around a unit ball in $H$. Formally, let $S \subset H$ be a finite set such that $1_H \in S$ and $\langle S \rangle = H$. 

$\texttt{Top}(X,f) \subset (\{0,1,\$\}^{(p-1)|S|})^{\ZZ}$ is the $\ZZ$-subshift given by:

$$ \texttt{Top}(X,f) := \bigcup_{x \in X}\left(\overline{\Orb_{\sigma}\left(\Psi_q(f_s(x))_{(q,s) \in \{1,\dots, p-1\} \times S}\right)}\right) $$

Elements of $\texttt{Top}(X,f)$ are $(p-1)|S|$-tuples which encode elements of the $\ZZ$-orbit of each $\Psi_q(f_{s}(x))$. The idea behind this construction is to let each $q$-row code an element $x \in X$ and its image $f_s(x)$ for each $s \in S$. Given $y  \in \texttt{Top}(X,f)$ we denote the projection to the $q,s$-th layer by $\texttt{Layer}_{q,s}(y) \in \{0,1,\$ \}^{\ZZ}$. We need to do this for every possible $q$ just for technical reasons, as we'll need to match every possible lattice in the substitution defined above. For all practical purposes, just one coordinate $q$ carries all the information we need to code.


\begin{proposition}\label{Xtildeiseffective}
	If $(X,f)$ is an effectively closed $H$-flow then $\texttt{Top}(X,f)$ is an effectively closed $\ZZ$-subshift.
\end{proposition}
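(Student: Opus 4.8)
The goal is to show that $\texttt{Top}(X,f)$ is effectively closed, i.e. that there is a Turing machine enumerating a set of forbidden patterns defining it. The plan is to reduce this to the effectiveness of $(X,f)$ by unwinding the definition of $\texttt{Top}(X,f)$ through the three encoding layers: first $\Psi_q$, then the orbit closure, then the finite product over $(q,s)\in\{1,\dots,p-1\}\times S$. The key observation is that each of these operations is ``effective'': applying a computable map, taking orbit closures, and taking finite unions all preserve the property of being effectively closed, as long as the data involved is presented by a Turing machine. Since $(X,f)$ is an effectively closed $H$-flow, we have a machine that on input $s\in S$ and $w\in\{0,1\}^*$ enumerates cylinders whose complement is $f_s^{-1}([w])$, and a machine enumerating the forbidden words of $X$; these are the only black boxes we need.

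The first step is to describe, for a fixed $x\in X$, the language of patterns appearing in $\overline{\Orb_\sigma(\Psi_q(x))}$ in a uniform computable way. A finite word $u\in\{0,1,\$\}^{[a,b]}$ appears in some element of $\overline{\Orb_\sigma(\Psi_q(x))}$ if and only if it appears as a factor $\Psi_q(x)|_{[c,c+(b-a)]}$ for some $c\in\ZZ$ — except possibly for the single ``degenerate'' coordinate $x_\infty$ discussed after Proposition~\ref{propositionorbitsaredisjoint}, which can be handled by allowing a bounded exception. Crucially, by the Toeplitz structure established in Section~2 (the symbol $x_n$ sits exactly at positions $qp^n \bmod p^{n+1}$, and only finitely many coordinates $x_0,\dots,x_N$ of $x$ influence $\Psi_q(x)$ on any window of length $\le p^{N+1}$), whether a given finite word is consistent with $\Psi_q(x)$ on some window depends only on finitely many coordinates of $x$, and this dependence is primitive recursive in the window and in $N$. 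So I would write an explicit computable predicate $\mathrm{Good}_q(u, x_0\cdots x_N)$ that decides, given a finite prefix of $x$, whether the finite word $u$ is compatible with the Toeplitz coding, and note that $u$ fails to appear in $\texttt{Layer}_{q,s}$ of any element of $\texttt{Top}(X,f)$ precisely when for \emph{every} $x\in X$ the predicate fails — which, since membership in $X$ is co-recursively-enumerable and the predicate is decidable, is a $\Pi^0_1$ (hence effectively enumerable as a complement) condition. A cleaner route: forbidden patterns for $\texttt{Top}(X,f)$ are exactly the patterns $P$ such that the set of $x\in X$ compatible with $P$ is empty; compatibility is a decidable clopen condition on $x$, so incompatibility-with-all-of-$X$ is enumerable by searching, for increasing $n$, for a covering of the compatible clopen set by cylinders forbidden in $X$ (whose forbidden words are enumerable).

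The second step is to handle the ``image'' layers. The word appearing in $\texttt{Layer}_{q,s}(y)$ must be $\Psi_q$ applied not to $x$ itself but to $f_s(x)$; but all layers for a common $y$ come from a \emph{single} $x$, with a \emph{common} shift offset (they are tuples of orbit-closure elements of the single tuple-valued function, so the offsets are linked). To forbid a pattern $P=(P_{q,s})$ I need: there is no $x\in X$ and no offset $c$ such that simultaneously, for all $(q,s)$, $P_{q,s}$ matches $\Psi_q(f_s(x))$ on the window shifted by $c$. Matching $\Psi_q(f_s(x))$ on a bounded window constrains finitely many coordinates of $f_s(x)$, i.e. forces $f_s(x)\in[w]$ for finitely many cylinders $[w]$; pulling back through $f_s$, using the effectiveness-of-$f$ machine, gives $x\in\{0,1\}^{\NN}\setminus\bigcup_j[w_j]$ — again a co-r.e. description. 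So the joint compatibility of $P$ with $(x,c)$ is an \emph{intersection over all $(q,s)$} of co-r.e. conditions on $x$, together with the co-r.e. condition $x\in X$; $P$ is forbidden iff this intersection is empty for every integer offset $c$. Emptiness of an intersection of co-r.e. sets inside a compact space is itself r.e. (by compactness: search for a finite subcover of the complement by the enumerated cylinders). Running this search over all $P$, all offsets $c$ in the bounded relevant range, and dovetailing, yields a Turing machine enumerating a defining set of forbidden patterns.

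I expect the main obstacle to be the bookkeeping around the degenerate coordinates $x_\infty$ (and the analogous one for each layer) and the linkage of shift offsets across the $(p-1)|S|$ layers: one must be careful that ``$u$ appears in the orbit closure'' is not literally ``$u$ appears in $\Psi_q(x)$'' but ``$u$ appears, possibly using the one exceptional position'', and that the product over layers is a product of orbit closures of a tuple, not an arbitrary product. The honest way to deal with this is to note that the orbit closure $\overline{\Orb_\sigma(\Psi_q(x))}$ — and more generally of the tuple — is itself effectively closed \emph{uniformly in $x$} (its forbidden patterns, given a prefix of $x$, are decidable, and adding ``allow one exceptional periodic position'' keeps them decidable), and then $\texttt{Top}(X,f)=\bigcup_{x\in X}(\text{effectively closed set, uniformly in }x)$ with $X$ co-r.e. is effectively closed by the standard ``union over a co-r.e. parameter set of uniformly effective closed sets is effective'' lemma. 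Everything else is routine verification that the relevant predicates are computable and that the compactness-based searches terminate.
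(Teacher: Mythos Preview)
Your overall strategy coincides with the paper's: decode the Toeplitz skeleton of a candidate word to recover finite prefixes $(w_s)_{s\in S}$ of $(f_s(x))_s$, then invoke the effectiveness of $(X,f)$ together with compactness to semi-decide whether any $x\in X$ is compatible. The paper does this concretely using the operator $\Omega_k$ of Proposition~\ref{propositionOMEGA}: on a word of length $p^{n+1}$ it checks that every length-$p$ block is a cyclic permutation of $a\$^{q-1}b\$^{p-q-1}$, extracts $b$ and the common offset $k_0$ across all layers, applies $\Omega_{k_0}$, and iterates; once the words $(w_s)$ are decoded it runs the two semi-decision procedures coming from the effective presentation of $X$ and of $f_s^{-1}$. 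The paper also first verifies that $\texttt{Top}(X,f)$ is closed and shift-invariant, which you omit.

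One step in your write-up is wrong as stated and needs repair. The assertion that ``compatibility is a decidable clopen condition on $x$'' is false: for a pattern $P$ of length $L\le p^N$, compatibility at a \emph{fixed} offset $c$ is clopen in $x$, but the existential over $c\in\ZZ$ is only open. The reason is exactly the ``deep'' coordinate you flag: the unique position of level $\ge N$ in the window carries $f_s(x)_{m'}$ for some $m'\ge N$ depending on $c$, so the union over $c$ depends on the tail of $x$. Consequently your $\forall c$ produces a $\Pi^0_2$ condition, not $\Sigma^0_1$, and the parenthetical ``$\Pi^0_1$ (hence effectively enumerable as a complement)'' is the wrong direction. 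The fix---which the paper uses implicitly by letting the symbol $a$ be arbitrary at each stage---is that you do not need to enumerate \emph{all} forbidden patterns, only a defining set: ignore the deep position and flag $P$ only when its \emph{shallow} constraints are already unrealizable over $X$. Any configuration with valid Toeplitz structure that contains a genuinely forbidden $P$ will, at some longer scale $p^{m'+1}$, exhibit the offending coordinate as a shallow one, and the corresponding longer pattern is then flagged. With this reduction the offset $c$ ranges over the finite set $\{0,\dots,p^N-1\}$ and your compactness search terminates; this is precisely what your predicate $\mathrm{Good}_q$ and the paper's $\Omega_k$-recursion both compute.
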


\begin{proof}
$\texttt{Top}(X,f)$ is clearly shift invariant. It is closed as $X$ is closed and thus a diagonal argument allows to extract convergent subsequences. A set of forbidden patterns defining $\texttt{Top}(X,f)$ is the following. We consider for $n \in \NN$ all words of length $p^{n+1}$ over the alphabet $\{\$,0,1\}^{|S|(p-1)}$ which do not appear in any configuration of $\texttt{Top}(X,f)$. As this is an increasing sequence of forbidden patterns it is enough to define $\texttt{Top}(X,f)$.
	
	This set of forbidden words is recursively enumerable. The following algorithm accepts a set of forbidden patterns defining $\texttt{Top}(X,f)$. Let the input be a word of length $p^{n}$ for $n \in \NN$. The structure of $\texttt{Top}(X,f)$ makes it possible to recognize algorithmically all gaps in every layer (formally the algorithm checks that each substring of $p$ contiguous symbols is a cyclic permutation of $a\$^{q-1}b\$^{p-q-1}$ for some $a \in \{0,1,\$\}$ and $b \in \{0,1\}$). Then if this stage is passed, it computes $k_0$ from Proposition~\ref{propositionOMEGA} for each layer, checks that $b$ is the same symbol throughout the word. Finally it checks that $k_0$ is the same in every layer (thus the layers are aligned). Then it applies $\Omega_{k_0}$ to this string obtaining a word of length $p^{n-1}$. The algorithm is repeated until reaching a word of length $0$. If at any stage a check fails, the word is accepted as forbidden.
	
	The previous stage recognizes all words that haven't got the correct structure. After that stage ends, we can use the same algorithm and the function $\Omega_k$ to decode $n$ coordinates $x_0x_1\dots x_{n-1}$ for each pair $(q,s)$ and check for every $s$ that the word is the same independently of $q$. If this stage is passed we end up with $|S|$ words which depend only on $s$ and we denote them by $(w_s)_{s \in S}$. Here we run two recognition algorithms in parallel. One searches for a cylinder $[w_s] \not\subset X$ and the other searches if $[w_{1_H}] \not\subset f^{-1}_s([w_s])$. If any of these two searches succeed at a certain step then the algorithm returns that the pattern is forbidden. These two last algorithms do exists as $(X,f)$ is an effectively closed $H$-flow. \end{proof}
	
The subshift $\texttt{Top}(X,f)$ is the ingredient of the proof which allows us to simulate points $x \in X$ and their images under the generators of $H$ in a sofic $\ZZ^2$-subshift which contains this information. The next step is to put one of these configurations in each $\ZZ^2$-coset of $\ZZ^2 \rtimes_{\varphi} H$ and force by local rules that the shift action by $(0,h)$ yields the $\ZZ^2$-coset where the point $f_h(x)$ is codified. The obvious obstruction to this idea is the fact that the action under $(0,h)$ in a semidirect product disturbs the adjacency relations in a coset if the automorphism $\varphi_h$ isn't trivial. The way to go around this obstruction is to use the lattices given by the layer $\texttt{Sub}_v$ which are invariant under automorphisms. We specify how these two elements go together in the next subsection.

\subsection{Proof of Theorem~\ref{simulationTHEOREM}}

Denote $\varphi : H \to \Aut(\ZZ^2)$ a group homomorphism such that $G = \ZZ^2 \rtimes_{\varphi} H$ is given by: $$(n_1,h_1)\cdot(n_2,h_2) = (n_1 + \varphi_{h_1}(n_2),h_1h_2)$$ 

To make notations shorter, we write $\vec{0} = (0,0) \in \ZZ^2$ throughout the whole proof. Let $S$ be a finite set of generators of $H$ where $1_H \in S$ , $|S| = d$ and let's fix the parameter $p = 3$ which is used to construct $\texttt{Top}(X,f)$ (which contains thus $2d$ layers) and the substitutions $\texttt{Sub}_v$ for $v \in (\ZZ/3\ZZ)^2 \setminus \{\vec{0}\}$. Consider the following two $\ZZ^2$-subshifts.
		
 $$\texttt{Top}(X,f)^{H} \subseteq (\{0,1,\$\}^{2d})^{\ZZ^2}$$ 
  $$\texttt{Top}(X,f)^{V} \subseteq (\{0,1,\$\}^{2d})^{\ZZ^2}$$ 
 
 Where $x \in \texttt{Top}(X,f)^{H}$ is the subshift whose projection to $(\ZZ,0)$ belongs to $\texttt{Top}(X,f)$ and any vertical strip is constant. Analogously $x \in \texttt{Top}(X,f)^{V}$ is the subshift whose projection to $(0,\ZZ)$ belongs to $\texttt{Top}(X,f)$ and any horizontal strip is constant. Formally: $x \in \texttt{Top}(X,f)^{H}$ if $\forall i,j \in \ZZ$ then $x_{i,j}=x_{i,j+1}$ and $(x_{(i,0)})_{i \in \ZZ} \in \texttt{Top}(X,f)$. An analogous definition can be given for $\texttt{Top}(X,f)^{V}$. Proposition~\ref{Xtildeiseffective} says that $\texttt{Top}(X,f)$ is an effective $\ZZ$-subshift and therefore $\texttt{Top}(X,f)^{H}$ and $\texttt{Top}(X,f)^{V}$ are sofic $\ZZ^2$-subshifts by the simulation theorem proven in \cite{AubrunSablik2010, DurandRomashchenkoShen2010}. Next we are going to put these subshifts together with the substitution layers to create a rich structure in each $\ZZ^2$-coset.

	Let $\Pi(X,f) \subset \texttt{Top}(X,f)^{H} \times \texttt{Top}(X,f)^{V} \times \bigotimes_{v \in (\ZZ/3\ZZ)^2 \setminus \{\vec{0}\} }\texttt{Sub}_v$ be the $\ZZ^2$-subshift defined by forbidding the following symbols in the product alphabet. In order to describe the forbidden symbols correctly, we introduce the following notation: For $y \in \Pi(X,f)$ we denote by $\texttt{Layer}^H_{q,s}(y)$ and $\texttt{Layer}^V_{q,s}(y)$ the projections to the first and second layers in the $(q,s)$ coordinate respectively and for $v \in (\ZZ/3\ZZ)^2 \setminus \{\vec{0}\}$ we denote by $\texttt{Sub}_v(y)$ the projection to the corresponding substitutive layer.

	\begin{enumerate}
		\item $\forall (i,j) \in \ZZ^2$ and $(a,b) \in (\ZZ/3\ZZ)^2 \setminus \{\vec{0}\}$ the following is satisfied. If $a \neq 0$ then $(\texttt{Sub}_{(a,b)}(y))_{(i,j)} = \tileb$ if and only if $(\texttt{Layer}^H_{a,1_H}(y))_{(i,j)} \in \{0,1\}$. Analogously, if $b \neq 0$ then then $(\texttt{Sub}_{(a,b)}(y))_{(i,j)} = \tileb$ if and only if $(\texttt{Layer}^V_{b,1_H}(y))_{(i,j)} \in \{0,1\}$.
		\item If $(\texttt{Sub}_{(1,1)}(y))_{(i,j)} = \tileb$ then $\forall s \in S$  $(\texttt{Layer}^H_{1,s}(y))_{(i,j)} = (\texttt{Layer}^V_{1,s}(y))_{(i,j)}$.
	\end{enumerate}
	
	The $\ZZ^2$-subshift $\Pi(X,f)$ is sofic. Indeed, all the component are sofic subshifts and the added rules are local (we just forbid symbols in the product alphabet). In what follows we use the following notation: for a configuration $x \in \ag^G$, $A \subset G$ and $a \in \ag$ we write $x|_{A} \equiv a$ if $\forall g \in A$ then $x_g = a$.  Recall that we denote by $B_m(z)$ the $m$-th lattice of black squares in a configuration $z$ in a substitutive layer. 
	
	\begin{claim}\label{propositionlatticescarryinformation}
			Let $y \in \Pi(X,f)$, $(a,b) \in  (\ZZ/3\ZZ)^2 \setminus \{\vec{0}\}$ and $z = \texttt{Sub}_{(a,b)}(y)$. Suppose that $\texttt{Layer}^H_{a,1_H}(y)$ is given by $x \in X$. Then:
			\begin{itemize}
				\item  If $a \neq 0$ then $\forall m \in \NN, \forall s \in S$: $\texttt{Layer}^H_{a,s}(y)|_{B_m(z)} \equiv f_s(x)_m$
				\item  If $b \neq 0$ then $\forall m \in \NN, \forall s \in S$: $\texttt{Layer}^V_{b,s}(y)|_{B_m(z)} \equiv f_s(x)_m$
				\item  The configurations in the layers $\texttt{Top}(X,f)^{H}$ and $\texttt{Top}(X,f)^{V}$ are defined by the same $x \in X$.
			\end{itemize}
	\end{claim}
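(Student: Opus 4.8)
The plan is to recover the value $f_s(x)_m$ that is stored on the level-$m$ lattice by combining the canonical decomposition of configurations of $\texttt{Sub}_v$ into the lattices $B_m$ (the fifth listed property of $\texttt{Sub}_v$), the Toeplitz structure of $\texttt{Top}(X,f)$ together with Propositions~\ref{propositionOMEGA} and~\ref{propositionorbitsaredisjoint}, and the two families of forbidden symbols defining $\Pi(X,f)$. First I would exploit the hypothesis: the $\ZZ$-configuration underlying the $\texttt{Top}(X,f)^{H}$-component of $y$ lies, by definition of $\texttt{Top}(X,f)=\bigcup_{x'\in X}\overline{\Orb_\sigma((\Psi_q(f_s(x')))_{(q,s)})}$, in $\overline{\Orb_\sigma((\Psi_q(f_s(x_H)))_{(q,s)})}$ for some $x_H\in X$, so $\texttt{Layer}^H_{a,1_H}(y)\in\overline{\Orb_\sigma(\Psi_a(x_H))}$; since by hypothesis this layer is also in $\overline{\Orb_\sigma(\Psi_a(x))}$, Proposition~\ref{propositionorbitsaredisjoint} forces $x_H=x$. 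Consequently, for each $q$ the layers $(\texttt{Layer}^H_{q,s}(y))_{s\in S}$ arise from this single $x$ with one common shift, hence share a level structure: a coordinate that the $q$-family sees as a level-$m$ position (a position carrying the $m$-th letter of the encoded sequence) is a level-$m$ position for every $\texttt{Layer}^H_{q,s}(y)$, where it takes the value $f_s(x)_m$.

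For the first bullet, fix $a\neq 0$ and write $z=\texttt{Sub}_{(a,b)}(y)$. The first rule defining $\Pi(X,f)$ says that the black squares of $z$ are exactly the non-$\$$ positions of $\texttt{Layer}^H_{a,1_H}(y)$; projecting this equality of subsets of $\ZZ^2$ to the first coordinate, the first coordinates occurring among black squares of $z$ are exactly the non-$\$$ positions of the $\ZZ$-layer underlying $\texttt{Layer}^H_{a,1_H}(y)$. By the fifth property of $\texttt{Sub}_v$ the former set is the disjoint union over $m$ of the first-coordinate projections of $B_m(z)$, each a coset of $p^{m+1}\ZZ$ with $p$-adic ``digit $a$ at level $m$''; by the Toeplitz structure analysed before Proposition~\ref{propositionorbitsaredisjoint} the latter set is the disjoint union over $m$ of the level-$m$ positions, again such a coset. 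Both decompositions are recovered from the common pattern by the same procedure (the level-$0$ coset is the unique coset of $p\ZZ$ entirely avoiding $\$$, and one recurses on the unique mixed coset of $p\ZZ$), so they agree level by level; the single degenerate lattice $B_\infty(z)$ and the single exceptional non-periodic symbol are isolated points and cannot perturb infinite cosets. Hence the first coordinate of every point of $B_m(z)$ is a level-$m$ position for the $a$-family, so by the preceding paragraph $\texttt{Layer}^H_{a,s}(y)$ is constant on $B_m(z)$ equal to $f_s(x)_m$ for each $s\in S$.

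For the third and second bullets, let $x_V\in X$ be decoded similarly from the $\texttt{Top}(X,f)^{V}$-component of $y$. Running the argument above on $\texttt{Sub}_{(1,1)}(y)$, whose two coordinates are nonzero, the horizontal clause of the first rule gives $\texttt{Layer}^H_{1,s}(y)\equiv f_s(x)_m$ on $B_m:=B_m(\texttt{Sub}_{(1,1)}(y))$, while the vertical clause of the first rule together with the coordinate-exchanged version of the argument gives $\texttt{Layer}^V_{1,s}(y)\equiv f_s(x_V)_m$ on $B_m$. The second rule defining $\Pi(X,f)$ forces $\texttt{Layer}^H_{1,s}(y)=\texttt{Layer}^V_{1,s}(y)$ on each $B_m$; hence $f_s(x)_m=f_s(x_V)_m$ for all $m$ and all $s\in S$, and taking $s=1_H$ gives $x=x_V$, which is the third bullet. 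With $x_V=x$ in hand, the second bullet is exactly the coordinate-exchanged version of the second paragraph's argument applied to $z=\texttt{Sub}_{(a,b)}(y)$ through the vertical clause of the first rule, valid since $b\neq 0$.

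The main obstacle is the rigidity step in the second paragraph: the first rule equates only the black/non-$\$$ indicator patterns, so one must verify that the two intrinsic decompositions of that common set into nested cosets — the one carried by the substitutive layer via its unique derivation, and the one carried by the Toeplitz layer via its $p$-adic recursion — coincide level by level, and that the unavoidable single-point discrepancies coming from $B_\infty$ and the exceptional symbol are harmless.
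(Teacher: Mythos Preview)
Your proposal is correct and follows essentially the same approach as the paper: use rule 1 to align each $B_m(z)$ with the level-$m$ Toeplitz coset, then rule 2 on the $(1,1)$ layer to synchronize the horizontal and vertical encodings. Your ``same procedure'' argument is a clean repackaging of the paper's explicit induction on $m$ (at each stage exactly one of the three $p\ZZ$-cosets is entirely non-$\$$, so it must be the current $\mathrm{proj}_1 B_m(z)$); note however that you invoke both directions of the ``if and only if'' in rule 1 to obtain equality of the projected sets, whereas the paper's induction---and indeed the explicit configurations built in Proposition~\ref{proposition.final}---only uses, and only supports, the implication $\tileb\Rightarrow\text{non-}\$$.
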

	
	\begin{proof}
		Let $a \neq 0$. It suffices to show this property for $s = 1_H$ as the definition of $\texttt{Top}(X,f)$ forces the configurations to be aligned. The lattice $B_0(z)$ has the form $(i_0,j_0) + (a,b)+3\ZZ^2$, therefore its projection in the horizontal coordinate is $k_0+3\ZZ$ for $k_0 = i_0+a \mod{3}$. Using the structure of $\Psi_a(x)$ there are three possibilities for $3$-lattices: One contains uniformly the symbol $x_0$, another contains only the symbol $\$$ and the third one contains $\Psi_a(\sigma(x))$ by proposition~\ref{propositionOMEGA}. The first rule of $\Pi(X,f)$ rules out the second and third possibility because there would be $\$$'s matched with $\tileb$. Therefore $\texttt{Layer}^H_{a,1_H}|_{B_0(z)} \equiv x_0$. Inductively, let $B_m(z) = (i_m,j_m)+ (a,b)3^m+3^{m+1}\ZZ^2$ and suppose $\forall m' < m$ $\texttt{Layer}^H_{a,1_H}|_{B_{m'}(z)} \equiv x_{m'}$. Note that for $m'$ the projection to the horizontal layer is $k_{m'}+3^{m'+1}\ZZ$ for  $k_{m'} := i_{m}+a3^{m'} \mod{3^{m'+1}}$. Using iteratively the previous argument and applying the function $\Omega_{k_{m'}}$ defined in~\ref{propositionOMEGA} we end up with three possibilities for $3^m$-lattices (that is, the value of $k_{m'}$), and again the first rule of $\Pi(X,f)$ rules out two of them, yielding $\texttt{Layer}^H_{a,1_H}|_{B_{m}(z)} \equiv x_{m}$.
		
		Suppose the configuration in $\texttt{Top}(X,f)^{V}$ is given by $x' \in X$. For $b$ the proof is analogous and we get that $b \neq 0$ implies that $\forall m \in \NN, \forall s \in S$: $\texttt{Layer}^V_{b,s}|_{B_m(z)} \equiv f_s(x')_m$. 
		
		Now set $(a,b) = (1,1)$. The second rule of $\Pi(X,f)$ implies that $\forall s \in S, m \in \NN$ then $(\texttt{Layer}^H_{1,s}(y))|_{B_m(z)}= (\texttt{Layer}^V_{1,s}(y))|_{B_m(z)}$. Using the previous two properties we conclude that $\forall s \in S, m \in \NN$ we have $f_s(x)_m = f_s(x')_m$. Using $s = 1_H$ yields $x = x'$ hence proving the second and third statement.
\end{proof}

From Claim~\ref{propositionlatticescarryinformation} we obtain that each configuration $y \in \Pi(X,f)$ contains the information of a single $x \in X$. We can thus define properly the decoding function $\Upsilon: \Pi(X,f) \to X$ such that $\Upsilon(y) = x$ if and only if $\forall m \in \NN$: $\texttt{Layer}^H_{1,1_H}(y)|_{B_m(\texttt{Sub}_{(1,1)}(y))} \equiv x_m$.

Consider the set of forbidden patterns $\FF$ defining $\Pi(X,f)$. Each of these patterns has a finite support $F \subset \ZZ^2$. We extend those patterns to patterns in $G = \ZZ^2 \rtimes_{\varphi} H$ by associating $d \in F \to (d,1_H) \in G$. Therefore every pattern $P \in \FF$ with support $F \subset \ZZ^2$ is embedded into a pattern $\widetilde{p}$ with support $(F,1_H) \subset G$. We consider the set $\widetilde{\FF} = \{\widetilde{p} \mid p \in \FF\}$ and we define $\texttt{Final}(X,f)$ as the subshift over the same alphabet as $\Pi(X,f)$ defined by the set of forbidden patterns $\widetilde{\FF} \cup \mathcal{G}$ where $\mathcal{G}$ is defined as follows:

For each $s \in S$ consider $\varphi_{s^{-1}}$ the automorphism associated to $s^{-1}$ and $(a,b) = \widetilde{\varphi}_{s^{-1}}(1,1)$. We put in $\mathcal{G}$ all the patterns $P$ with support $\{(\vec{0},1_H), (\vec{0},s^{-1})\}$ which satisfy that $\texttt{Sub}_{(a,b)}(P_{(\vec{0},1_H)}) = \tileb$ but either:
\begin{itemize}
	\item  $\texttt{Sub}_{(1,1)}(P_{(\vec{0},s^{-1})}) \neq \tileb$ or
	\item $\texttt{Sub}_{(1,1)}(P_{(\vec{0},s^{-1})}) = \tileb$ and
	\begin{itemize}
		\item If $a \neq 0$ then $\texttt{Layer}^H_{a,s}(P_{(\vec{0},1_H)}) \neq \texttt{Layer}^H_{1,1_H}(P_{(\vec{0},s^{-1})}) $ or
		\item If $b \neq 0$ then $\texttt{Layer}^V_{b,s}(P_{(\vec{0},1_H)}) \neq \texttt{Layer}^V_{1,1_H}(P_{(\vec{0},s^{-1})}) $.
	\end{itemize}
\end{itemize}

In simpler words: we force that every $\tileb$ in layer $\texttt{Sub}_{(a,b)}$ of the $(\ZZ^2,1_H)$-coset must be matched with a $\tileb$ in $\texttt{Sub}_{(1,1)}$ in the $(\ZZ^2,s^{-1})$-coset and that if $a \neq 0$ then the symbol in $(\vec{0},1_H)$ in $\texttt{Layer}^H_{a,s}$ is the same as the symbol in $(\vec{0},s^{-1})$ in $\texttt{Layer}^H_{1,1_H}$. If $b \neq 0$ we impose that the symbol in $(\vec{0},1_H)$ in $\texttt{Layer}^V_{b,s}$ is the same as the symbol in $(\vec{0},s^{-1})$ in $\texttt{Layer}^V_{1,1_H}$.

Before continuing let's translate $\widetilde{\FF} \cup \mathcal{G}$ into properties of $\texttt{Final}(X,f)$. In order to do that properly, for $y \in \texttt{Final}(X,f)$ we denote by $\pi(y)$ the $\ZZ^2$-configuration such that $\forall (i,j) \in \ZZ^2$ $\pi(y)_{(i,j)} = y_{((i,j),1_H)}$.

\begin{claim}\label{finalsubshiftproperties}
	$\texttt{Final}(X,f)$ satisfies the following properties:
	\begin{itemize}
		\item $\texttt{Final}(X,f)$ is a sofic $G$-subshift.
		\item Let $y \in \texttt{Final}(X,f)$. Then $\pi(y) \in \Pi(X,f)$.
		\item If $\Upsilon(\pi(y)) = x$ then $\forall h \in H$, $\Upsilon(  \pi( \sigma_{(\vec{0},h)}(y) ) ) = f_h(x)$.
	\end{itemize}
\end{claim}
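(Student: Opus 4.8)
The plan is to establish the three items in sequence, leaning on what is already available for the first two and doing the real work for the third. A convenient piece of vocabulary: for a coset $(\ZZ^2,h)$ of $\ZZ^2$ in $G$, call the configuration $n\mapsto y_{(\varphi_h(n),h)}$ the \emph{twisted coset configuration} of $y$ at $h$; note it equals $\pi(\sigma_{(\vec{0},h^{-1})}(y))$, and for $h=1_H$ it is just $\pi(y)$.

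\textbf{Soficity.} Since $\Pi(X,f)$ is a sofic $\ZZ^2$-subshift I would fix a $\ZZ^2$-SFT $\widehat\Pi\subseteq\mathcal B^{\ZZ^2}$ with finitely many forbidden patterns and a $1$-block factor $\phi:\widehat\Pi\twoheadrightarrow\Pi(X,f)$ induced by $\overline\phi:\mathcal B\to\ag$, where $\ag$ is the alphabet of $\Pi(X,f)$; let $\Phi:\mathcal B^G\to\ag^G$, $\Phi(\widehat y)_g=\overline\phi(\widehat y_g)$, be the associated radius-zero $G$-morphism. I would then build a $G$-SFT $\widehat Y\subseteq\mathcal B^G$ by forbidding: (i) for each forbidden pattern $P$ of $\widehat\Pi$ with support $F_0\subseteq\ZZ^2$, the pattern $\widehat P$ on support $(F_0,1_H)$ with $\widehat P_{(f,1_H)}=P_f$, together with all its $G$-translates; and (ii) for each pattern of $\mathcal G$, every pattern over $\mathcal B$ on the same support that $\overline\phi$ carries coordinatewise into $\mathcal G$, with all $G$-translates (family (ii) is finite because $\mathcal G$ and $\mathcal B$ are). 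The key elementary point is that the $G$-translates of $\widehat P$ are exactly the patterns supported on $(m+\varphi_h(F_0),h)$ for $m\in\ZZ^2$, $h\in H$, so avoiding them all is equivalent to requiring that the twisted coset configuration at each $h$ avoids $P$; hence $\widehat Y$ is precisely the set of $\widehat y$ all of whose twisted coset configurations lie in $\widehat\Pi$ and with $\Phi(\widehat y)$ avoiding $\mathcal G$. Since applying $\overline\phi$ coordinatewise commutes with passing to twisted coset configurations, I would check $\Phi(\widehat Y)=\texttt{Final}(X,f)$: for $\widehat y\in\widehat Y$ every twisted coset configuration of $\Phi(\widehat y)$ lies in $\phi(\widehat\Pi)=\Pi(X,f)$, so $\Phi(\widehat y)$ avoids $\widetilde{\FF}$, and it avoids $\mathcal G$ by (ii); conversely any $y\in\texttt{Final}(X,f)$ lifts, coset by coset, to $\widehat y\in\widehat Y$ with $\Phi(\widehat y)=y$ (there are no constraints in $\widehat Y$ linking distinct cosets besides (ii), which is satisfied because $y$ avoids $\mathcal G$). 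This exhibits $\texttt{Final}(X,f)$ as a factor of a $G$-SFT, hence as a sofic $G$-subshift.

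\textbf{The projection lands in $\Pi(X,f)$.} Let $y\in\texttt{Final}(X,f)$. Among the $G$-translates of patterns in $\widetilde{\FF}$, those supported inside $(\ZZ^2,1_H)$ are exactly the translates by $(m,1_H)$, $m\in\ZZ^2$, of the embeddings of the $p\in\FF$ (here using $\varphi_{1_H}=\mathrm{id}$), so they realize every $p\in\FF$ at every position of $(\ZZ^2,1_H)$; as $\pi(y)$ is the configuration $(i,j)\mapsto y_{((i,j),1_H)}$, it contains no $p\in\FF$, i.e.\ $\pi(y)\in X_\FF=\Pi(X,f)$. Applying this to $\sigma_{(\vec{0},h)}(y)\in\texttt{Final}(X,f)$ gives $\pi(\sigma_{(\vec{0},h)}(y))\in\Pi(X,f)$ for all $h\in H$, so by Claim~\ref{propositionlatticescarryinformation} the decoding $\Upsilon$ is defined on every configuration occurring in the third item.

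\textbf{Realizing the action.} Set $x=\Upsilon(\pi(y))$. Because $\sigma$ and $f$ are left actions, $\sigma_{(\vec{0},h_1h_2)}=\sigma_{(\vec{0},h_1)}\circ\sigma_{(\vec{0},h_2)}$, and each $f_s$ is invertible, I would reduce to proving $\Upsilon(\pi(\sigma_{(\vec{0},s)}(y)))=f_s(x)$ for every $s\in S$ and every $y\in\texttt{Final}(X,f)$; the general $h$ then follows by induction on word length over $S\cup S^{-1}$, inverse generators being handled via $\sigma_{(\vec{0},s)}\circ\sigma_{(\vec{0},s^{-1})}=\mathrm{id}$ together with invertibility of $f_s$. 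Fix $s\in S$, put $(a,b)=\widetilde\varphi_{s^{-1}}(1,1)\ne\vec{0}$, and assume $a\ne 0$ (the case $b\ne 0$ is symmetric, using $\texttt{Layer}^V$ and the third item of Claim~\ref{propositionlatticescarryinformation} to relate $\texttt{Layer}^V_{1,1_H}$ of the shifted configuration back to $\Upsilon$). Write $z=\texttt{Sub}_{(a,b)}(\pi(y))$ and $w=\pi(\sigma_{(\vec{0},s)}(y))$, so $w_n=y_{(\varphi_{s^{-1}}(n),s^{-1})}$. Claim~\ref{propositionlatticescarryinformation}, applied with the element $x$ (which is encoded consistently in all layers of $\pi(y)$, this being precisely what $\Upsilon(\pi(y))=x$ records), gives $\texttt{Layer}^H_{a,s}(\pi(y))|_{B_m(z)}\equiv f_s(x)_m$ for every $m$. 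Meanwhile, each black square of $z$ at a position $(i,j)$ of the coset $(\ZZ^2,1_H)$ forces, by the definition of $\mathcal G$ for $s$, both $\texttt{Sub}_{(1,1)}(y_{((i,j),s^{-1})})=\tileb$ and $\texttt{Layer}^H_{a,s}(y_{((i,j),1_H)})=\texttt{Layer}^H_{1,1_H}(y_{((i,j),s^{-1})})$. Combining these, for every $m$ and $(i,j)\in B_m(z)$ we obtain $\texttt{Layer}^H_{1,1_H}(w)_{\varphi_s(i,j)}=f_s(x)_m$, that is $\texttt{Layer}^H_{1,1_H}(w)|_{\varphi_s(B_m(z))}\equiv f_s(x)_m$, and also $\texttt{Sub}_{(1,1)}(w)|_{\varphi_s(B_m(z))}\equiv\tileb$.

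The remaining step, which I expect to be the main obstacle, is to identify $\varphi_s(B_m(z))$ with $B_m(\texttt{Sub}_{(1,1)}(w))$ — exactly the point for which the substitutions of Section~\ref{section.substitution} are tailored. Since $\widetilde\varphi_s(a,b)=\widetilde\varphi_s\widetilde\varphi_{s^{-1}}(1,1)=(1,1)$, the equivariance property (the sixth item of Section~\ref{section.substitution}) yields $z'\in\texttt{Sub}_{(1,1)}$ with $B_m(z')=\varphi_s(B_m(z))$ for all $m$. By the second item already proved, $\texttt{Sub}_{(1,1)}(w)\in\texttt{Sub}_{(1,1)}$, and it is black on $\bigcup_m B_m(z')$. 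A density count based on the fifth item of Section~\ref{section.substitution} — the lattices $B_0,B_1,\dots$ of a configuration of $\texttt{Sub}_{(1,1)}$ are pairwise disjoint, have densities $3^{-2},3^{-4},\dots$, and exhaust its black squares up to at most one point — then forces, inductively on $m$, that $B_m(\texttt{Sub}_{(1,1)}(w))=B_m(z')=\varphi_s(B_m(z))$: a coset of $3^{m+1}\ZZ^2$ consisting of black squares of $\texttt{Sub}_{(1,1)}(w)$ and disjoint from the already-identified lower-level lattices cannot be absorbed by the strictly sparser remaining ones, hence must be $B_m(\texttt{Sub}_{(1,1)}(w))$. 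Feeding this back, $\texttt{Layer}^H_{1,1_H}(w)|_{B_m(\texttt{Sub}_{(1,1)}(w))}\equiv f_s(x)_m$ for every $m$, which by the definition of $\Upsilon$ says $\Upsilon(w)=f_s(x)$, completing the generator case and hence the proof.
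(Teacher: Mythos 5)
Your proposal is correct and follows essentially the same route as the paper: reduce to generators $s\in S$, apply Claim~\ref{propositionlatticescarryinformation} to the coset configurations, use the patterns in $\mathcal{G}$ to transfer the substitutive and Toeplitz information to the $s^{-1}$-coset, and identify the image under $\varphi_s$ (equivalently $\varphi_{s^{-1}}$) of the lattices with the lattices of the target substitutive configuration. Your density/induction argument for that last identification, and your explicit treatment of inverse generators and of the SFT cover in the soficity step, merely fill in details the paper states tersely rather than constituting a different approach.
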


\begin{proof}
	As $\Pi(X,f)$ is sofic, it admits an SFT extension $\phi : \widehat{\Pi}(X,f) \twoheadrightarrow \Pi(X,f)$. By embedding as above a finite list of forbidden patterns defining $\widehat{\Pi}(X,f)$ into $G$ we obtain a $G$-SFT extension of $X_{\widetilde{\FF}}$. Adding to this list of forbidden patterns the pullback of the finite list of forbidden patterns $\mathcal{G}$ under the local code $\Phi$ defining $\phi$ we obtain an SFT extension $\widehat{\texttt{Final}}(X,f)$ of $\texttt{Final}(X,f)$.
	
	The second property comes directly from the definition of $\texttt{Final}(X,f)$ as it contains an embedding of every forbidden pattern defining $\Pi(X,f)$. Note that as $G$ is not necessarily abelian, it may happen that $y|_{(\ZZ^2,h)}$ seen as a $\ZZ^2$-configuration does not belong to $\Pi(X,f)$ for some $h \in H$, but $\pi(\sigma_{(\vec{0},h^{-1})}(y))$ always does. 
	
	Let's prove the third property: We claim that it suffices to prove the property for $s \in S$. Indeed, given $h\in H$, as $H = \langle S \rangle$ there exists a minimal length word representing $h$. If $h = 1_H$ the result is immediate. If not, then $h = sh'$ for some $h' \in H$ having a shorter word representation. Suppose this third property holds for all words of strictly smaller length and define $y' = \sigma_{(\vec{0},h')}(y)$ we have that $\Upsilon(\pi(y')) = f_{h'}(x) = x'$, so:
	$$\Upsilon(\pi(\sigma_{(\vec{0},h)}(y))) =  \Upsilon(\pi(\sigma_{(\vec{0},s)}(y'))) = f_s(x') = f_{s}(f_{h'}(x)) = f_h(x).$$
	It suffices therefore to prove the property for $s \in S$. Let's denote $y' = \sigma_{(\vec{0},s)}(y)$ and let $\Upsilon(\pi(y)) = x$ and  $\Upsilon(\pi(y')) = x'$. We want to prove that $x' = f_s(x)$. Let $\widetilde{\varphi}_{s^{-1}}(1,1) = (a,b)$ and suppose that $a \neq 0$ (if $a = 0$ then $b \neq 0$ and the argument is analogous). Let $m \in \NN$, using Claim~\ref{propositionlatticescarryinformation} we obtain $$\texttt{Layer}^H_{1,1_H}(y')|_{(B_m(\texttt{Sub}_{(1,1)}(y')),1_H)} \equiv x'_m$$
	$$\texttt{Layer}^H_{a,s}(y)|_{(B_m(\texttt{Sub}_{(a,b)}(y)),1_H)} \equiv f_s(x)_m.$$
	
	Using the forbidden patterns $\mathcal{G}$ results in $$\texttt{Sub}_{(1,1)}(y)|_{(B_m(\texttt{Sub}_{(a,b)}(y)),s^{-1})} \equiv \tileb$$ $$\texttt{Layer}^H_{1,1_H}(y)|_{(B_m(\texttt{Sub}_{(a,b)}(y)),s^{-1})} \equiv f_s(x)_m.$$
	
	Finally, developing the action on $y'$ yields \begin{align*}
	 y'|_{(B_m(\texttt{Sub}_{(1,1)}(y')),1_H)} &=  \sigma_{(\vec{0},s)}(y)|_{(B_m(\texttt{Sub}_{(1,1)}(y')),1_H)} \\
	 & = y|_{(\vec{0},s^{-1})  (B_m(\texttt{Sub}_{(1,1)}(y')),1_H)} \\
	 & = y|_{(\varphi_{s^{-1}}(B_m(\texttt{Sub}_{(1,1)}(y'))),s^{-1})}.
	 \end{align*}
	
	Using the results from Section~\ref{section.substitution} we obtain that $B_m(\texttt{Sub}_{(1,1)}(y')$ is of the form $(i_m,j_m)+(1,1)3^{m}+3^{m+1}\ZZ^2$ and thus $\varphi_{s^{-1}}(B_m(\texttt{Sub}_{(1,1)}(y'))$ is  $\varphi_{s^{-1}}(i_m,j_m)+(a,b)3^{m}+3^{m+1}\ZZ^2$. Which is $B_m(z)$ for some $z \in \texttt{Sub}_{(a,b)}$. As we have $\forall m \in \NN$ that $\texttt{Sub}_{(1,1)}(y')|_{(B_m(\texttt{Sub}_{(1,1)}(y')),1_H)} \equiv \tileb$ and $\texttt{Sub}_{(1,1)}(y)|_{(B_m(\texttt{Sub}_{(a,b)}(y)),s^{-1})} \equiv \tileb$ we conclude that $ \varphi_{s^{-1}}(B_m(\texttt{Sub}_{(1,1)}(y')) = B_m(\texttt{Sub}_{(a,b)}(y))$. Therefore, $$\texttt{Layer}^H_{1,1_H}(y')|_{(B_m(\texttt{Sub}_{(1,1)}(y')),1_H)} = \texttt{Layer}^H_{1,1_H}(y)|_{(B_m(\texttt{Sub}_{(a,b)}(y)),s^{-1})}.$$
	Which yields $x'_m = f_s(x)_m$. As $m \in \NN$ is arbitrary $x' = f_s(x)$.\end{proof}
	
	Finally we are ready to finish the proof. Consider again the SFT extension $\widehat{\texttt{Final}}(X,f)$ of $\texttt{Final}(X,f)$, the factor map $\phi: \widehat{\texttt{Final}}(X,f) \twoheadrightarrow \texttt{Final}(X,f)$ and the subaction $(\widehat{\texttt{Final}}(X,f), \sigma^H)$.
	
	\begin{proposition}\label{proposition.final}
		$\Upsilon \circ \pi \circ \phi$ is a factor map from $(\widehat{\texttt{Final}}(X,f),\sigma^H)$ to $(X,f)$.
	\end{proposition}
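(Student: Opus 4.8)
The plan is to verify the three requirements for $\psi := \Upsilon \circ \pi \circ \phi$ to be a factor map from $(\widehat{\texttt{Final}}(X,f),\sigma^H)$ to $(X,f)$: continuity, $H$-equivariance, and surjectivity onto $X$. Continuity is immediate: $\phi$ is a factor map between SFTs, hence a sliding block code; $\pi$ reads the single coordinate $\pi(y)_{(i,j)} = y_{((i,j),1_H)}$; and $\Upsilon$ is continuous because, by the rigidity of the nested lattices from Section~\ref{section.substitution}, the anchor of the level-$m$ lattice $B_m(\texttt{Sub}_{(1,1)}(y))$ sits in $[0,3^{m+1})^2$ and is already determined by $y$ on a ball of radius $O(3^m)$, so $\Upsilon(y)_m$, the common value of $\texttt{Layer}^H_{1,1_H}(y)$ on that lattice, depends only on finitely many coordinates of $y$. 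A composition of continuous maps is continuous.

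For equivariance, recall that $\sigma^H$ is the restriction of the $G$-shift to the copy of $H$ inside $G$, and that $\phi$, being a morphism of $G$-subshifts, satisfies $\phi(\sigma_{(\vec 0,h)}(w)) = \sigma_{(\vec 0,h)}(\phi(w))$ for every $w$ and $h \in H$. Setting $y = \phi(w) \in \texttt{Final}(X,f)$, the third item of Claim~\ref{finalsubshiftproperties} gives $\Upsilon(\pi(\sigma_{(\vec 0,h)}(y))) = f_h(\Upsilon(\pi(y)))$, which is exactly $\psi(\sigma^H_h(w)) = f_h(\psi(w))$.

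For surjectivity, since $\phi$ is onto it suffices to realize each $x \in X$ as $\Upsilon(\pi(y))$ for some $y \in \texttt{Final}(X,f)$, and I would construct such a $y$ explicitly. To the coset $(\ZZ^2,h)$ I assign a configuration carrying the data of $f_h(x)$ together with its translates $f_s(f_h(x))$, $s\in S$: for the substitutive layers I fix a master configuration $\zeta \in \texttt{Sub}_{(1,1)}$ with all its lattices anchored at the origin and transport it through the automorphisms $\varphi_{h^{-1}}$, using the compatibility of lattices with automorphisms established in Section~\ref{section.substitution}, to obtain a family of configurations in the $\texttt{Sub}_v$ whose entire lattice systems match under the $G$-action; this transport is well-defined because $\varphi$, and hence $\widetilde\varphi$, is a homomorphism, so it is coherent along every relation of $H$. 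For the Toeplitz layers of the coset $(\ZZ^2,h)$ I use the unshifted configurations $\Psi_q(f_s(f_h(x)))$, so that by Proposition~\ref{propositionOMEGA} the $n$-th coordinate of $f_s(f_h(x))$ lands on the projection of the level-$n$ lattice. With these choices the defining rules of $\Pi(X,f)$ hold on every coset, so no pattern of $\widetilde{\FF}$ occurs; and since coset $h$ codes $f_h(x)$ while coset $hs$ codes $f_s(f_h(x)) = f_{hs}(x)$ and the substitution lattices were made coherent, no pattern of $\mathcal{G}$ occurs. Hence $y \in \texttt{Final}(X,f)$ with $\Upsilon(\pi(y)) = x$, and lifting $y$ through $\phi$ finishes the proof.

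The hard part is entirely in the surjectivity step, and within it in making the substitutive layers of different cosets mutually coherent: one must check that the transport of the master configuration depends only on $h$, not on the word over $S$ used to reach it, and that the induced identifications of lattices are precisely the ones needed to avoid the patterns of $\mathcal{G}$ relating the $(\ZZ^2,1_H)$- and $(\ZZ^2,s^{-1})$-cosets. The homomorphism property of $\varphi$ together with the lattice-transport computation of Section~\ref{section.substitution} is what makes this work, but carrying out the simultaneous alignment of all $2d$ Toeplitz layers across all cosets of $\ZZ^2$ in $G$ is the bookkeeping-heavy core of the argument.
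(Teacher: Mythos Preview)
Your outline is essentially the paper's proof: continuity is dismissed quickly, equivariance is read off Claim~\ref{finalsubshiftproperties}, and surjectivity is obtained by an explicit configuration built coset by coset from origin-anchored substitution layers together with the unshifted Toeplitz encodings $\Psi_q(f_s(\cdot))$.

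Two corrections to your sketch are worth recording. First, the assignment has the wrong handedness: with the left shift convention used here, $\pi(\sigma_{(\vec 0,h)}(y))_{(i,j)} = y_{(\varphi_{h^{-1}}(i,j),\,h^{-1})}$, so the \emph{raw} coset $(\ZZ^2,h)$ must carry the data of $f_{h^{-1}}(x)$, not $f_h(x)$. With your convention, the $\mathcal{G}$-rule at center $((i,j),h)$ compares $\texttt{Layer}^H_{a,s}$ on coset $h$ (encoding $f_s f_h(x)=f_{sh}(x)$) with $\texttt{Layer}^H_{1,1_H}$ on coset $hs^{-1}$ (encoding $f_{hs^{-1}}(x)$), and these do not agree in general. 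The paper's formula $(y^*)_{((i,j),h)} = (y^{h^{-1}})_{\varphi_{h^{-1}}(i,j)}$, with $y^{h'}$ encoding $f_{h'}(x)$, is exactly what makes both the $\widetilde{\FF}$- and $\mathcal{G}$-checks go through. Second, the ``word-dependence'' difficulty you anticipate does not arise: the paper places the \emph{same} origin-anchored configuration $z_{(a,b)}$ in every substitutive layer of every $y^{h}$, so the construction is defined directly in terms of $h\in H$ and no coherence along relations of $H$ needs to be verified for the substitutive part. The only genuine computation is the lattice-matching check for $\mathcal{G}$, which the paper carries out explicitly.
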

	
	\begin{proof}
		As  $\phi: \widehat{\texttt{Final}}(X,f) \twoheadrightarrow \texttt{Final}(X,f)$ it suffices to show that $\Upsilon \circ \pi$ is a factor map from $(\texttt{Final}(X,f),\sigma^H)$ to $(X,f)$. Let $y \in \texttt{Final}(X,f)$. Following Claim~\ref{finalsubshiftproperties} we have $\pi(y) \in \Pi(X,f)$ and thus $\Upsilon(\pi(y))  \in X$. Moreover, setting $\Upsilon(\pi(y)) = x$ yields $\forall h \in H$ that $\Upsilon(\sigma_{(\vec{0},h)}(y)) = f_h(x)$. This implies
		 $$\forall h \in H: (\Upsilon \circ \pi ) \circ \sigma_{(\vec{0},h)} = f_h \circ (\Upsilon \circ \pi).$$
		 
		 Also, both $\Upsilon$ and $\pi$ are clearly continuous, therefore, it only remains to show that $\Upsilon \circ \pi$ is surjective. Let $x \in X$, we construct a configuration $y^* \in \texttt{Final}(X,f)$ such that $\Upsilon(\pi(y^*)) = x$.
		 
		  In order to do this, we begin by constructing a sequence of configurations $(y^h)_{h \in H}$ which belong to $\Pi(X,f)$. For $(a,b) \in  (\ZZ/3\ZZ)^2 \setminus \{\vec{0}\}$ let $z_{(a,b)} \in \texttt{Sub}_{(a,b)}$ defined by $(i_n,j_m) = 0$ for all $m \in \NN$. Said otherwise, $B_m(z_{(a,b)}) = (a,b)3^m + 3^{m+1}\ZZ^2$ for $m \in \NN$ and $B_{\infty}(z_{(a,b)})= \emptyset$. We define $y^h \in \Pi(X,f)$ by $\texttt{Sub}_{(a,b)}(y^h) = z_{(a,b)}$ and $\forall (i,j) \in \ZZ^2$, $s \in S$, $a,b \in \{1,2\}$ then $\texttt{Layer}^H_{a,s}(y^h)_{(i,j)} = \Psi_a(f_s(f_h(x)))_{i}$ and $\texttt{Layer}^V_{b,s}(y^h)_{(i,j)} = \Psi_b(f_s(f_h(x)))_{j}$. It can easily be verified that for each $h \in H$ the configuration $y^h \in \Pi(X,f)$.
		  
		  Finally, we define $y^*$ as follows: $$ (y^*)_{( (i,j), h)} = (y^{h^{-1}})_{\varphi_{h^{-1}}(i,j)}. $$
		As $\varphi_{1_H}(i,j) = (i,j)$ then $\pi(y^*) = y^{1_H}$ and thus $\Upsilon(\pi(y^*)) = f_{1_H}(x) = x$. It suffices to show that $y^* \in \texttt{Final}(X,f)$. This comes down to showing that no patterns in $\FF$ or $\mathcal{G}$ appear in $y^*$. Suppose a pattern $P \in \FF$ appears at position $g = ((i,j),h)$, that is $y^* \in [P]_g \iff \sigma_{g^{-1}}(y^*) \in [P]_{1_G}$. As $P$ has a support contained in $(\ZZ^2, 1_H)$ then $\pi(\sigma_{g^{-1}}(y^*)) \notin \Pi(X,f)$. But \begin{align*}
		\sigma_{g^{-1}}(y^*)_{((i',j'),1_H)} & = (y^*)_{g((i',j'),1_H)} \\
		& = (y^*)_{((i,j)+\varphi_h(i',j'),h)} \\
		& = (y^{h^{-1}})_{(i',j')+\varphi_{h^{-1}}(i,j)} \\
		& = (\sigma_{-\varphi_{h^{-1}}(i,j)}(y^{h^{-1}}))_{(i',j')}.
		\end{align*} Therefore $\pi(\sigma_{g^{-1}}(y^*)) = \sigma_{-\varphi_{h^{-1}}(i,j)}(y^{h^{-1}}) \in \Pi(X,f)$ which is a contradiction. Hence $y^*$ does not contain any pattern from $\FF$. It remains to show it contains no patterns in $\mathcal{G}$. Recall that patterns $P \in \mathcal{G}$ have support $\{ (\vec{0},1_H),(\vec{0},s^{-1})\}$ for $s \in S$. Let $g = ((i,j),h)$ such that $\sigma_{g^{-1}}(y^*) \in [P]_{1_G}$. Then $\sigma_{g^{-1}}(y^*)_{(\vec{0},1_H)} = \sigma_{-\varphi_{h^{-1}}(i,j)}(y^{h^{-1}})_{\vec{0}}$ and \begin{align*}
		\sigma_{g^{-1}}(y^*)_{(\vec{0},s^{-1})}  & = (y^*)_{((i,j),h)(\vec{0},s^{-1})} \\ & = (y^*)_{((i,j),hs^{-1})} \\& = (y^{sh^{-1}})_{\varphi_{(sh^{-1})}(i,j)} \\& = (\sigma_{-(\varphi_{(sh^{-1})}(i,j))}(y^{sh^{-1}}))_{\vec{0}}.\end{align*} 
		Let $m \in \NN$ and denote $(a,b) = \widetilde{\varphi}_{s^{-1}}(1,1)$. By definition $B_m(\texttt{Sub}_{(a,b)}(y^h) = (a,b)3^m+3^{m+1}\ZZ^2$ therefore,
		
		$$B_m(\texttt{Sub}_{(a,b)}(\sigma_{-\varphi_{h^{-1}}(i,j)}(y^h)) = (a,b)3^m+\varphi_{h^{-1}}(i,j)+3^{m+1}\ZZ^2$$
		
		In the other hand, $$B_m(\texttt{Sub}_{(1,1)}(\sigma_{-(\varphi_{(sh^{-1})}(i,j))}(y^{sh^{-1}})) = (1,1)3^m+\varphi_{(sh^{-1})}(i,j)+3^{m+1}\ZZ^2.$$
		
		So, if $\texttt{Sub}_{(a,b)}(\sigma_{g^{-1}}(y^*))_{(\vec{0},1_H)} = \tileb$ then $\vec{0} \in (a,b)3^m+\varphi_{h^{-1}}(i,j)+3^{m+1}\ZZ^2$ for some $m \in \NN$. Applying $\varphi_s$ at both sides we obtain:
		
		\begin{align*}
		\varphi_s(\vec{0})= \vec{0} & \in \varphi_s(a,b)3^m + \varphi_{(sh^{-1})}(i,j) +3^{m+1}\ZZ^2 \\
		& = \widetilde{\varphi}_s(a,b)3^m + \varphi_{(sh^{-1})}(i,j) +3^{m+1}\ZZ^2 \\
		& = (1,1)3^m + \varphi_{(sh^{-1})}(i,j) +3^{m+1}\ZZ^2 \\
		& = B_m(\texttt{Sub}_{(1,1)}(\sigma_{-(\varphi_{(sh^{-1})}(i,j))}(y^{sh^{-1}})).
		\end{align*}
		
		Implying that $\texttt{Sub}_{(1,1)}(\sigma_{g^{-1}}(y^*))_{(\vec{0},s^{-1})} = \tileb$. Moreover, if either $a$ is non-zero (the $b \neq 0$ case is analogous), then, using the previous computation we get: $$\texttt{Layer}^H_{a,s}(\sigma_{g^{-1}}(y^*))_{(\vec{0},1_H)} = f_s(f_{h^{-1}}(x))_m$$ $$\texttt{Layer}^H_{1,1_H}(\sigma_{g^{-1}}(y^*))_{(\vec{0},1_H)} = f_{sh^{-1}}(x)_m.$$
		
		So no patterns from $\mathcal{G}$ appear, yielding $y^* \in \texttt{Final}(X,f)$.\end{proof}
	
	Proposition~\ref{proposition.final} concludes the proof of Theorem~\ref{simulationTHEOREM}.
	
\section{Consequences and remarks}\label{section.consequences}

In this last section we explore some consequences of our simulation theorem. The first one is the case of expansive actions. Here we show that as long as the group is recursively presented, the action can be presented in a convenient way that allows us to replace the subaction by the projective subdynamics. The second is an application of this theorem to produce non-empty strongly aperiodic subshifts in a class of groups where this fact whas previously unknown, answering a question posed by Sahin in a symbolic dynamics workshop held in Chile in december 2014. We also extend a Theorem of Jeandel~\cite{Jeandel2015} to the existence of effectively closed strongly aperiodic flows in general.

We close this section by remarking that the technique used to prove Theorem~\ref{simulationTHEOREM} is valid in an even larger class (namely, simulation in $\ZZ^d \rtimes G$) and with a discussion on the size of the extension. Indeed, in Hochman's article~\cite{Hochman2009b} the subaction is shown to be an almost trivial isometric extension. We dedicate the last part of this section to informally discuss the size of the factor in our construction and how a similar result could be obtained.

\subsection{The simulation theorem for expansive effective flows}

Before presenting the simulation theorem for expansive actions, we must define with more detail effectively closed subshifts in groups. A longer survey of these concepts can be found in~\cite{ABS2014}. Given a group $G$ generated by $S$ and a finite alphabet $\ag$ a \define{pattern coding} $c$ is a finite set of tuples $c=(w_i,a_i)_{i \in I}$ where $w_i \in S^{*}$ and $a_i \in \ag$. A set of pattern codings $\CC$ is said to be recursively enumerable if there is a Turing machine which takes as input a pattern coding $c$ and accepts it if and only if $c \in \CC$. A subshift $X \subset \ag^G$ is \define{effectively closed} if there is a recursively enumerable set of pattern codings $\CC$ such that:$$ X = X_{\CC} := \bigcap_{g \in G, c \in \CC} \left( \ag^G \setminus \bigcap_{(w,a) \in c}[a]_{gw} \right).$$

With this formal concept in hand, we show the following lemma:

\begin{lemma}\label{lemma_symbolicfactoriseffective}
	For every finitely generated group, any $G$-subshift which is the factor of an effectively closed $G$-flow is itself effectively closed.
\end{lemma}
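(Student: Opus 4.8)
The plan is to show directly that the image subshift admits a recursively enumerable set of pattern codings. Let $(X,f)$ be an effectively closed $G$-flow, $Y \subset \ag^G$ a $G$-subshift, and $\psi : X \twoheadrightarrow Y$ a factor map. Since $\psi$ is continuous and $G$-equivariant, and $X \subset \{0,1\}^{\NN}$ carries the product topology while $\ag^G$ carries the prodiscrete topology, the value $\psi(x)_{1_G} \in \ag$ depends only on finitely many coordinates of $x$: there is a radius $r$ and a continuous (hence computable on the clopen partition it induces) map sending the first $r$ coordinates of $x$ to $\psi(x)_{1_G}$. Equivariance then gives $\psi(x)_g = \psi(f_{g^{-1}}(x))_{1_G}$, so each coordinate of $\psi(x)$ is determined by finitely many coordinates of a preimage of $x$ under the action, and this preimage approximation is computable by the Turing machine witnessing that $f$ is an effectively closed action.

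The key step is to characterize $Y = \psi(X)$ by forbidden patterns in an effective way. A pattern $P$ with support $F \subset G$ is \emph{not} globally admissible in $Y$ if and only if there is no $x \in X$ with $\psi(x)|_F = P$. Equivalently, writing the condition $\psi(x)|_F = P$ as a clopen condition on $x$ using the finite-radius local rule above and the computable preimage approximations for the elements of $F$, the pattern $P$ is forbidden iff this clopen subset of $X$ is empty. Emptiness of a clopen subset of the effectively closed set $X$ (given as the complement of an r.e. union of cylinders) is a co-semidecidable property: one enumerates the cylinders removed from $\{0,1\}^{\NN}$ and halts if they, together with the finitely many constraints coming from "$\psi(x)|_F \ne P$" being impossible, cover the clopen set — here one uses compactness of $\{0,1\}^{\NN}$ so that a finite subcover exists exactly when the set is empty. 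Running this search over all pattern codings $c$ (translating each $c$ into a clopen condition as above) yields a Turing machine that accepts exactly the codings of globally forbidden patterns, so $Y = X_{\CC}$ for a recursively enumerable $\CC$.

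I expect the main obstacle to be the bookkeeping that makes "$\psi(x)|_F = P$" into an honestly decidable clopen condition on $x$ uniformly in $F$ and $P$: one must compose the fixed local rule of $\psi$ with the preimage-approximation machine for each $g \in F$, and then argue — via compactness of $\{0,1\}^{\NN}$ and the fact that $X$ is effectively closed — that the \emph{unsatisfiability} of finitely many such clopen conditions over $X$ is semidecidable. Once this is set up, surjectivity of $\psi$ is what guarantees that every configuration of $Y$ arises, so that the forbidden patterns defined this way recover exactly $Y$; none of the remaining verifications require more than routine unwinding of the definitions of effectively closed flow and of $X_\CC$ recalled just above.
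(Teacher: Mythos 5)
Your proposal is correct and follows essentially the same route as the paper: you recover the local clopen rule $W_a=\psi^{-1}([a]_{1_G})$ by uniform continuity, use equivariance to rewrite the occurrence of a pattern in $\psi(x)$ as membership of $x$ in an intersection of action-preimages of these clopen sets, and then semidecide emptiness of that effectively closed subset of $X$ by compactness, enumerating the forbidden pattern codings. The only remarks are cosmetic: like the paper, you leave implicit that the (non-uniformly obtained) finite description of the local rule is hardcoded into the machine and that preimages under arbitrary words in the generators are handled by iterating the generator-wise preimage machine, and your final appeal to surjectivity should be paired with the fact that $Y$, being a subshift, is exactly cut out by forbidding all patterns that do not occur in it.
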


\begin{proof} 
		Let $G$ be generated by the finite set $S \subset G$, $(X,f)$ an effectively closed $G$-flow over a Cantor set, $(Y,\sigma)$ a $G$-subshift and $\phi: (X,f) \twoheadrightarrow (Y,\sigma)$ a factor. 
		
		Recall that $X \subset \{0,1\}^{\NN}$ and $Y \subset \ag^G$ for some finite $\ag$. As both $X$ and $Y$ are compact, $\phi$ is uniformly continuous. Therefore for each $a \in \ag$ then $\phi^{-1}([a]) = W_a$ where $W_a$ is a clopen set depending on a finite number of coordinates. For any pattern coding $c $: $$\phi^{-1}\left( \bigcap_{(w,a) \in c}[a]_{w} \right) = \bigcap_{(w,a) \in c}\phi^{-1}(\sigma_{w^{-1}}([a]) ) = \bigcap_{(w,a) \in c} f_{w^{-1}}( \phi^{-1}([a]) )$$
	
	Therefore,
	$$Y \cap \bigcap_{(w,a) \in c}[a]_{w} = \emptyset \implies X \cap \bigcap_{(w,a) \in c}f_{w^{-1}}( W_{a} ) = \emptyset.$$
	
	As $(X,f)$ if effectively closed, there is a Turing machine which can approximate the set $\bigcap_{(w,a) \in c}f_{w^{-1}}( W_{a} )$ as each $W_a$ is just a finite union of a finite intersection of cylinders and $w^{-1} \in S^*$. Also, for each partial approximation we can use the enumeration cylinders defining the complement of $X$ to recognize if the intersection is empty, using these tools we can construct a Turing machine recognizing a maximal set of forbidden pattern codings defining $Y$.\end{proof}

\begin{theorem}\label{theorem.projective}
	Let $H$ be a finitely generated and recursively presented group.
	\begin{enumerate}
		\item Let $(X,f)$ be an effectively closed expansive $H$-flow over a Cantor set. Then there exists a $(\ZZ^2 \rtimes H)$-sofic subshift $Y$ such that its $H$-projective subdynamics $\pi_H(Y)$ is conjugated to $(X,f)$.
		\item Let $Z$ be an effectively closed $H$-subshift. Then there exists a sofic $(\ZZ^2 \rtimes H)$-subshift $Y$ such that its $H$-projective subdynamics $\pi_H(Y)$ is $Z$.
	\end{enumerate}
\end{theorem}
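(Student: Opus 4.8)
The plan is to obtain both statements from Theorem~\ref{simulationTHEOREM} by tracking exactly where in a configuration of $\texttt{Final}(X,f)$ the point $x\in X$ is recorded, and then adjoining a single extra symbol layer that reads it off. Part~2 is the heart of the matter and part~1 reduces to it, so I describe part~2 first.

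For part~2, start from an effectively closed $H$-subshift $Z\subseteq\ag^H$ and turn it into an effectively closed $H$-flow. Fix a computable enumeration $u_0=\epsilon,u_1,u_2,\dots$ of $S^{*}$ and an injection $\ag\hookrightarrow\{0,1\}^{k}$; the map $z\mapsto(z_{[u_n]})_{n\in\NN}$ identifies $Z$ homeomorphically and $H$-equivariantly with a subset $X_Z\subseteq\{0,1\}^{\NN}$. Because $H$ is recursively presented the relation $u_m=_H u_n$ is recursively enumerable, and because $Z$ is effectively closed one checks routinely that $X_Z$ with the transported action $f$ is an effectively closed $H$-flow, and that the decoding map $\mathrm{dec}\colon X_Z\to\ag$ recovering $z_{1_H}$ depends only on the first $k$ coordinates. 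Apply Theorem~\ref{simulationTHEOREM} to $(X_Z,f)$: this yields the sofic subshift $\texttt{Final}(X_Z,f)$, an SFT extension $\phi\colon\widehat{\texttt{Final}}(X_Z,f)\twoheadrightarrow\texttt{Final}(X_Z,f)$, and the maps $\pi$ and $\Upsilon$ of Claim~\ref{finalsubshiftproperties} and Proposition~\ref{proposition.final}.

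Now adjoin to the alphabet of $\widehat{\texttt{Final}}(X_Z,f)$ a coordinate valued in $\ag$. By Claim~\ref{propositionlatticescarryinformation} the point $x\in X_Z$ coded by a $\ZZ^2$-coset is recovered from the (constant) values of $\texttt{Layer}^H_{1,1_H}$ on the lattices $B_0,\dots,B_{k-1}$ of $\texttt{Sub}_{(1,1)}$, and by property~5 of Section~\ref{section.substitution} applied to the hierarchy-marked SFT cover (property~4), these lattices sit inside a fixed finite window of $\ZZ^2$; so we may forbid every pattern supported on that window inside the $1_H$-coset in which the $\ag$-symbol at $(\vec{0},1_H)$ disagrees with $\mathrm{dec}$ applied to the first $k$ coordinates read off there, and $G$-translate this finite list to get a $G$-SFT $\widehat Y$. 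Let $Y$ be the image of $\widehat Y$ under projection onto the new $\ag$-coordinate, a sofic $G$-subshift. To see $\pi_H(Y)=Z$: for $y\in Y$ lifting to $\widehat y\in\widehat Y$, the $1_H$-coset of $\phi(\widehat y)$ codes the encoding $x$ of a well-defined $z\in Z$, and the $G$-translate of the layer rule gives $y_{(\vec{0},h)}=\mathrm{dec}(\Upsilon(\pi(\sigma_{(\vec{0},h^{-1})}\phi(\widehat y))))$, which by the third item of Claim~\ref{finalsubshiftproperties} equals $\mathrm{dec}(f_{h^{-1}}(x))=(\sigma_{h^{-1}}z)_{1_H}=z_h$; hence $\pi_H(y)=z$. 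Conversely, the configuration $y^{*}$ built in Proposition~\ref{proposition.final} from the encoding of an arbitrary $z\in Z$, lifted through $\phi$ and completed by the forced $\ag$-layer, gives an element of $Y$ projecting to $z$. Therefore $\pi_H(Y)=Z$.

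For part~1, let $(X,f)$ be an effectively closed expansive $H$-flow over a Cantor set. A finite clopen partition of $X$ with pieces of diameter below the expansivity constant gives, via the itinerary map $x\mapsto(\text{piece of }f_{g^{-1}}(x))_{g\in H}$, an $H$-equivariant homeomorphism of $X$ onto an $H$-subshift $Z$. A conjugacy is a factor, so Lemma~\ref{lemma_symbolicfactoriseffective} shows $Z$ is effectively closed; part~2 applied to $Z$ then produces a sofic $(\ZZ^2\rtimes H)$-subshift $Y$ with $\pi_H(Y)=Z$, conjugate to $(X,f)$.

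The main obstacle is the layer step: one must check that ``decode, to $k$ symbols, the point stored in the coset through $g$'' is expressible by finitely many forbidden patterns — which needs the finiteness of $k$, the constancy of $\texttt{Layer}^H_{1,1_H}$ on each $B_m$ (Claim~\ref{propositionlatticescarryinformation}), and the bounded localization of $B_0,\dots,B_{k-1}$ in the hierarchy-marked SFT cover rather than in $\texttt{Sub}_{(1,1)}$ itself — and then to arrange the $h\mapsto h^{-1}$ bookkeeping so that $\pi_H(Y)$ is $Z$ and not a reflected copy of it.
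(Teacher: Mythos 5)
Your route is essentially the paper's: part~1 is reduced to part~2 by conjugating the expansive flow to a subshift and invoking Lemma~\ref{lemma_symbolicfactoriseffective}; $Z$ is turned into an effectively closed flow on $\{0,1\}^{\NN}$ via an enumeration of $S^{*}$ (recursive presentability giving enumerability of the word problem); Theorem~\ref{simulationTHEOREM} is applied; and then $z_h$ is recovered locally at $(\vec{0},h)$, with the same bookkeeping $y_{(\vec{0},h)}=\mathrm{dec}(f_{h^{-1}}(x))=(\sigma_{h^{-1}}z)_{1_H}=z_h$ that the paper uses. The one point where you diverge is the read-off mechanism, and that is exactly where your justification does not hold as stated. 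The paper never touches the substitution hierarchy at this stage: it applies a sliding-block code to $\texttt{Layer}^H_{1,1_H}$ with window $((\{0,\dots,3^k-1\},0),1_H)$, exploiting the fact that in $\Psi_1(x)$ the $x_0$-positions are locally recognizable (a symbol of $\{0,1\}$ immediately followed by $\$$), so that $x_0,\dots,x_{k-1}$ can be read off any horizontal window of length $3^k$; soficness of the image is then automatic. You instead force an extra $\ag$-layer to agree with a decoding read off the lattices $B_0,\dots,B_{k-1}$ of $\texttt{Sub}_{(1,1)}$, justified by ``property~5 applied to the hierarchy-marked SFT cover (property~4)''. As written this is not correct: the $B_m$ are infinite cosets, so they do not ``sit inside a fixed finite window''; what your finite list of forbidden patterns needs is (i) that every window of side $3^{k+1}$ meets each $B_m$ with $m<k$, and (ii) that the level $m$ of a black cell is determined by a bounded window, and property~4 (mere existence of an almost $1$--$1$ SFT extension) asserts neither. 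The step is repairable without the cover: a black cell $c$ lies in $B_m$ ($m<k$) if and only if the four cells $c\pm 3^{m+1}e_1$, $c\pm 3^{m+1}e_2$ are all black and $c\notin B_{m'}$ for $m'<m$ (the four translates rule out the single possible $B_{\infty}$ exception), which is a local condition of radius about $3^{k+1}$; but the cleaner fix is simply to decode from the Toeplitz layer, as the paper does. With that one step shored up, the rest of your argument (including the surjectivity via the configuration $y^{*}$ of Proposition~\ref{proposition.final} and the reduction of part~1) matches the paper's proof.
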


\begin{proof}
	Let $S \subset H$ be a finite set such that $\langle S \rangle = H$. Consider a recursive bijection $\varphi : \NN \to S^*$ where $S^*$ is the set of all words on $S$ and $\varphi(0) = \epsilon$ the empty word. As $H$ is recursively presented, then its word problem $\texttt{WP}(H) = \{w \in S^* \mid w = 1_H \}$ is recursively enumerable and thus there is a Turing machine $T$ which accepts all pairs $(n,n') \in \NN^2$ such that $\varphi(n) = \varphi(n')$ as elements of $H$.
	
	In the first case as $(X,f)$ is an expansive $H$-action on a closed subset of the totally disconnected space $\{0,1\}^{\NN}$, it is conjugated to a subshift $Z \subset \ag^H$. Furthermore, using Lemma~\ref{lemma_symbolicfactoriseffective} we obtain that $Z$ is an effectively closed $H$-subshift. In the second case we just work with $(Z,\sigma)$ from the beginning. The argument is the same for both cases.

	For simplicity, let's first suppose $\ag = \{0,1\}$ and tackle the general case later. Consider the map $\rho : Z \to \{0,1\}^{\NN}$ where $\rho(z)_n = z_{\varphi(n)}$ where $\varphi(n) \in S^*$ is identified as an element of $H$. Consider the set $\Omega = \rho(Z)$ and the $H$-action $f' : H \times \Omega \to \Omega$ defined as $f'_h(\rho(z)) = \rho(\sigma_h(z))$. Clearly $\rho$ is a conjugacy between $(Z,\sigma)$ and $(\Omega,f')$. We claim that $(\Omega,f')$ is an effectively closed $H$-flow.
	
	Indeed, let $w \in \{0,1\}^*$. A Turing machine which accepts $w$ if and only if $[w] \in \{0,1\}^{\NN} \setminus \Omega$ is given by the following scheme: for each pair $(n,n')$ in the support of $w$ run $T$ in parallel. if $T$ accepts for a pair such that $w_n \neq w_{n'}$ then accept $w$ (this means that $w$ did not codify a configuration in $\ag^{\ZZ}$ as two words codifying different group elements have different symbols). Also, in parallel, use the algorithm recognizing a maximal set of forbidden patterns for $Z$ over the pattern coding $c_w = (\varphi(n),w_n)_{n \leq |w|}$. This eliminates all $w$ which codify configurations containing forbidden patterns in $Z$. For $f_s^{-1}[w]$ just note that the application $n \to \varphi(s^{-1}\varphi^{-1}(n))$ is recursive, thus $f_s^{-1}[w]$ can be calculated.
	
	As $(\Omega,f')$ is effectively closed, using Theorem~\ref{simulationTHEOREM} we can construct the sofic subshift $\texttt{Final}(\Omega,f')$. Instead of applying $\Upsilon \circ \pi$ to the $H$-subaction consider the local function $\Phi : \{0,1,\$\}^3 \to \{0,1\}$ defined as follows: Let $u \in \{0,1,\$\}^3$. If $u$ contains the word $0\$$, then $\Phi(u) = 0$, otherwise $\Phi(u) = 1$. Notice that $\phi : \{0,1,\$\}^{\ZZ} \to \{0,1\}^{\ZZ}$ defined by $\phi(y)_n = \Phi(\sigma_{-n}(y))$ satisfies that for any $x \in \{0,1\}^{\NN}$ then $\phi(\Psi_1(x)) = (x_0)^{\ZZ}$. Indeed, every substring of size $3$ is a cyclic permutation of $ax_0\$$ for some $a \in \{0,1,\$\}$.
	
	Now extend $\Phi$ to the support $F = ((\{0,1,2\},0),1_H)$ and let $\phi: \{0,1\$\}^G \to \{0,1\}^G$ be defined by $\phi(y)_{(z,h)} = \Phi(\sigma_{(z,h)^{-1}}(y)|_{F})$ and recall that $\texttt{Layer}^H_{1,1_H}$ is the projection to the layer containing in each horizontal strip a codification of $\Psi_1(x)$ for some $x \in \Omega$. Therefore, we can obtain a sofic $G$-subshift $$\texttt{Proj}(\Omega,f') = \phi(\texttt{Layer}^H_{1,1_H}(\texttt{Final}(\Omega,f')))$$
	
	We claim that $\pi_H(\texttt{Proj}(\Omega,f')) = (Z,\sigma)$. Note that $\forall y \in \texttt{Proj}(\Omega,f')$ then $y_{(z,h)} = (f'_{h^{-1}}(x))_0$. This means the $H$-projective subdynamics of $y$ is the configuration $\widetilde{x} \in \ag^H$ where $\widetilde{x}_h = (f'_{h^{-1}}(x))_0$. We claim $\widetilde{x} \in Z$. Indeed, as $x \in \Omega$ then there exists $z \in Z$ such that $x = \rho(z)$. By definition of $\rho$ and the fact that $\varphi(0) = \epsilon$ we have $$\widetilde{x}_h = (f'_{h^{-1}}(x))_0 = (\rho(\sigma_{h^{-1}}(z)))_0 = (\sigma_{h^{-1}}(z))_{1_H} = z_{h}.$$ 
	Conversely, every $z \in Z$ is realized by some $\rho(z) \in \Omega$, and there exists $y \in \texttt{Final}(\Omega,f')$ such that $\Upsilon(\pi(y))= \rho(z)$. Thus $z_h = \phi(\texttt{Layer}^H_{1,1_H}(y))_{(0,h)}$ henceforth $z \in \pi_H(\texttt{Proj}(\Omega,f')).$
	
	Therefore the $H$-projective subdynamics of $\texttt{Proj}(\Omega,f')$ is $(Z,\sigma)$ which is conjugated to $(X,f)$.
	
	In the case of a bigger $\ag$, we can code each $a \in \ag$ as a word in $\{0,1\}^k$ and redefine $\rho$ such that for $z\in Z$ then $\rho(z)_{n} = (z_{\varphi( \lfloor n/k \rfloor )})_{n\mod{k}}$. Everything stays the same in the construction except that the factor $\phi$ has support $\{0,\dots,3^k-1\}$ instead of $\{0,1,2\}$. it recovers from $\Psi_1(x)$ the first values $x_0,\dots,x_{k-1}$ and uses them to decode the corresponding $a \in \ag$.\end{proof}

\subsection{Existence of strongly aperiodic SFT in a class of groups obtained by semidirect products}

Next we show how these previous theorems can be applied to produce strongly aperiodic subshifts of finite type. We say a $G$-subshift $(X,\sigma)$ is \emph{strongly aperiodic} if the shift action is free, that is, $\forall x \in X, \sigma_g(x) = x \implies g = 1_G$.

\begin{theorem}\label{theoremaperiodic}
	Let $H$ be a finitely generated group and $(X,f)$ a non-empty effectively closed $H$-flow which is free. Then $G \cong \ZZ^2 \rtimes H$ admits a non-empty strongly aperiodic SFT.
\end{theorem}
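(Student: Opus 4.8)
Keeping the notation of the proof of Theorem~\ref{simulationTHEOREM}, the plan is to show that the $G$-SFT $\widehat{\texttt{Final}}(X,f)$ constructed there is itself a non-empty strongly aperiodic SFT. It is a $G$-SFT by construction, and it is non-empty because by Proposition~\ref{proposition.final} the map $F := \Upsilon \circ \pi \circ \phi$ is a factor from $\widehat{\texttt{Final}}(X,f)$ onto $X$, which is non-empty by hypothesis. So the whole task is to prove that the shift action of $G$ on $\widehat{\texttt{Final}}(X,f)$ is free.

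Let $\hat y \in \widehat{\texttt{Final}}(X,f)$ be fixed by $g = ((n_1,n_2),h) \in G$. The first step is to eliminate the $H$-component. I would write $g = ((n_1,n_2),1_H)\cdot(\vec{0},h)$, so that $\sigma_g = \sigma_{((n_1,n_2),1_H)}\circ\sigma_{(\vec{0},h)}$, and combine this with two ingredients. The first is that the decoding is blind to $\ZZ^2$-translations: for every $w \in \Pi(X,f)$ and every $u \in \ZZ^2$ one has $\Upsilon(\sigma_u(w)) = \Upsilon(w)$, which is immediate from the definition of $\Upsilon$ since translating $w$ by $u$ translates both the lattices $B_m(\texttt{Sub}_{(1,1)}(w))$ and the layer $\texttt{Layer}^H_{1,1_H}(w)$ by the same $u$. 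The second is the $H$-equivariance already established in Claim~\ref{finalsubshiftproperties}. Using also that $\phi$ is $G$-equivariant, that $\pi\circ\sigma_{((n_1,n_2),1_H)} = \sigma_{(n_1,n_2)}\circ\pi$, and that $\pi(\sigma_{(\vec{0},h)}(\phi(\hat y)))$ still lies in $\Pi(X,f)$ (Claim~\ref{finalsubshiftproperties}), one chains these identities to obtain
\[ F(\hat y) = F(\sigma_g(\hat y)) = f_h\bigl(F(\hat y)\bigr). \]
Since $(X,f)$ is a free $H$-flow and $F(\hat y) \in X$, this forces $h = 1_H$.

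It then remains to rule out nontrivial $g = ((n_1,n_2),1_H) \in \ZZ^2$. The key observation is that $\texttt{Sub}_{(1,1)}$ is a strongly aperiodic $\ZZ^2$-subshift. Indeed, if $u \in \ZZ^2$ fixes some $z \in \texttt{Sub}_{(1,1)}$, then, by property 5 of Section~\ref{section.substitution}, the set $B_m(z)$ is uniquely determined by $z$ and is a coset of $3^{m+1}\ZZ^2$; hence $\sigma_u(z)=z$ forces $B_m(z)+u = B_m(z)$, i.e.\ $u \in 3^{m+1}\ZZ^2$, and as this holds for every $m \in \NN$ we get $u = \vec{0}$. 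Now, since $\phi$ and $\pi$ are equivariant, $\sigma_{(n_1,n_2)}$ fixes the $\ZZ^2$-configuration $\pi(\phi(\hat y))$, hence fixes its substitutive layer $\texttt{Sub}_{(1,1)}(\pi(\phi(\hat y))) \in \texttt{Sub}_{(1,1)}$, so $(n_1,n_2) = \vec{0}$ and $g = 1_G$. This shows $\widehat{\texttt{Final}}(X,f)$ is a non-empty strongly aperiodic $G$-SFT.

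The main point requiring care is conceptual rather than computational: $(X,f)$ is free only as an $H$-flow and carries no $G$-action, so the argument cannot conclude aperiodicity in one stroke from the factor $F$; the $\ZZ^2$-direction is aperiodic for an independent reason (the rigidity of the substitutive layer $\texttt{Sub}_{(1,1)}$), and one must keep careful track of the semidirect-product conjugations when splitting $g$ into its $\ZZ^2$- and $H$-parts. Beyond that bookkeeping, each step is routine.
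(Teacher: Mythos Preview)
Your proof is correct and follows the same blueprint as the paper: you use the same $G$-SFT $\widehat{\texttt{Final}}(X,f)$, the same factor $\Upsilon\circ\pi\circ\phi$ onto $(X,f)$ to kill the $H$-coordinate, and the aperiodicity of the substitutive layer $\texttt{Sub}_{(1,1)}$ to kill the $\ZZ^2$-coordinate.

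The only difference is the order in which the two coordinates are eliminated. The paper first disposes of the $\ZZ^2$-part: it writes $\sigma_{(z,h)}=\sigma_{(z,1_H)}\sigma_{(\vec 0,h)}$, invokes from the proof of Theorem~\ref{simulationTHEOREM} that $\sigma_{(\vec 0,h)}$ preserves the lattice family $(B_m)_{m\in\NN}$ of $\texttt{Sub}_{(1,1)}$ on the $(\ZZ^2,1_H)$-coset, and then argues that a nonzero $z$ would shift some $B_M$ off itself; only afterwards does it use freeness of $(X,f)$ to get $h=1_H$. Your route reverses this: you first observe that $\Upsilon$ is invariant under $\ZZ^2$-shifts (immediate from its definition), combine this with the $H$-equivariance of Claim~\ref{finalsubshiftproperties} to conclude $h=1_H$, and then handle the remaining pure $\ZZ^2$-period with the standard lattice argument. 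Your order is slightly cleaner, since the $\ZZ^2$-invariance of $\Upsilon$ is trivial, whereas the paper's first step appeals back to a more delicate structural fact (that the $H$-action respects the $\texttt{Sub}_{(1,1)}$ lattices across cosets) buried in the main construction. Both arguments are short and the difference is cosmetic.
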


\begin{proof}
	We begin by recalling the following general property of factors. Suppose there is a factor $\phi: (X,f) \twoheadrightarrow (Y,f')$. and let $x \in X$ such that $f_g(x) = x$. Then $f'_g(\phi(x)) = \phi(f_g(x)) = \phi(x) \in Y$. This means that if $f'$ is a free action then $f$ is also a free action.
	
	By Theorem~\ref{simulationTHEOREM} we can construct the $(\ZZ^2 \rtimes H)$-SFT $\widehat{\texttt{Final}}(X,f)$ such that $(\widehat{\texttt{Final}}(X,f),\sigma_H)$ is an extension of $(X,f)$ via the factor $\phi_1 = \Upsilon \circ \pi \circ \phi$. We also consider the factor $\phi_2 = \texttt{Sub}_{(1,1)} \circ \phi$ which sends $\widehat{\texttt{Final}}(X,f)$ first to $\texttt{Final}(X,f)$ and then to its $\texttt{Sub}_{(1,1)}$ layer.
	
	Let $y \in \widehat{\texttt{Final}}(X,f)$ and $(z,h) \in \ZZ^2 \rtimes H$ such that $\sigma_{(z,h)}(y) = y$. This implies that $\phi_2(y) = \sigma_{(z,h)}(\phi_2(y)) = \sigma_{(z,1_H)}(\sigma_{(0,h)}(\phi_2(y)))$. As we have seen in the proof of Theorem~\ref{simulationTHEOREM}, the action $\sigma_{(0,h)}$ leaves the lattices $(B_m)_{m \in \NN}$ of $\texttt{Sub}_{(1,1)}$ invariant in the $(\ZZ^2, 1_H)$-coset. Let $M > ||z||^2$. Then $\sigma_{(z,0)}$ does not leave invariant the lattice $B_m$. This implies that $z = \vec{0}$.
	Therefore, $\sigma_{(\vec{0},h)}(y) = y$. Applying $\phi_1$ we obtain that $f_h(y) = y$, and thus $h = 1_H$. Therefore $(z,h) = (\vec{0},1_H)$ and $\widehat{\texttt{Final}}(X,f)$ is strongly aperiodic. It is non-empty as $X \neq \emptyset$. \end{proof}

Theorem~\ref{theoremaperiodic} allows us to produce strongly aperiodic subshifts in many groups, we state this in a corollary.

\begin{corollary}\label{corollaryaperioci}
	Let $H$ be a finitely generated group with decidable word problem, then $\ZZ^2 \rtimes H$ admits a non-empty strongly aperiodic SFT.
\end{corollary}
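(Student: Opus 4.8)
The plan is to deduce the corollary at once from Theorem~\ref{theoremaperiodic}. That theorem reduces everything to a single statement with no mention of subshifts of finite type: \emph{every finitely generated group $H$ with decidable word problem admits a non-empty effectively closed $H$-flow $(X,f)$ on which $H$ acts freely}. Granting such a flow, Theorem~\ref{theoremaperiodic} outputs a non-empty strongly aperiodic SFT over $\ZZ^2 \rtimes H$, with non-emptiness inherited from $X \neq \emptyset$, and the proof is complete.

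The whole work is therefore in constructing a non-empty free effectively closed $H$-flow. I would get the underlying dynamics from the existence of free subshifts: by the theorem of Gao--Jackson--Seward, every countably infinite group admits a non-empty subshift $X_0 \subseteq \{0,1\}^{H}$ on which the shift acts freely; for finite $H$ one instead takes the finite orbit of an injective coloring $H \to \ag$ with $|\ag| = |H|$, which is itself a free subshift. The role of the decidability of the word problem is to make $X_0$ effectively closed in the sense of Section~\ref{section.consequences}: a decidable word problem lets a Turing machine decide $w = w'$ for $w,w' \in S^{*}$, hence compute balls in the Cayley graph and fix a computable injective enumeration $\NN \to H$; this makes the marker-type construction underlying $X_0$ effective, so that $X_0$ is cut out by a recursively enumerable set of forbidden patterns. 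Transporting $X_0$ along the computable bijection $\NN \to H$ (and coding $\ag$ as blocks over $\{0,1\}$ in the finite case) exhibits it as a closed effective subset of $\{0,1\}^{\NN}$, and the shift action becomes an effectively closed action because for each $s \in S$ the permutation of positions induced by left multiplication by $s^{-1}$ is computable; this is exactly the bookkeeping already carried out in the proof of Theorem~\ref{theorem.projective}. The shift action stays free, so $(X_0,\sigma)$ is the $H$-flow we need.

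I expect the only genuine obstacle to be this middle step: verifying that the construction producing a free subshift really does go through effectively once the word problem is decidable. Everything after it, namely obtaining the strongly aperiodic $(\ZZ^2 \rtimes H)$-SFT, is a one-line invocation of Theorem~\ref{theoremaperiodic}. Concretely the corollary then applies to $H = \ZZ$ (hence to the Heisenberg group $\ZZ^2 \rtimes \ZZ$), to $H = \ZZ^{d}$, to free groups, to polycyclic groups and to the Grigorchuk group, and more generally to any finitely generated $H$ with solvable word problem; and, since $\ZZ^2 \rtimes H$ is recursively presented whenever $H$ is, and has undecidable word problem whenever $H$ does (because $H$ embeds into it as a subgroup), the result of Jeandel quoted in the introduction shows that among recursively presented $H$ the hypothesis of decidable word problem cannot be weakened.
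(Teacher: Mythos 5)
Your overall reduction coincides with the paper's: the corollary is made to rest entirely on producing a non-empty \emph{free} effectively closed $H$-flow and then invoking Theorem~\ref{theoremaperiodic}, and the conversion of an effectively closed $H$-subshift into a flow on a closed effective subset of $\{0,1\}^{\NN}$ via a computable enumeration of $S^*$ (using decidability of the word problem) is the same bookkeeping the paper imports from the proof of Theorem~\ref{theorem.projective}. Where you diverge is in the source of the free subshift, and that is precisely where your proposal has a genuine gap.

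The step you describe as ``the only genuine obstacle'' is in fact the mathematical heart of the corollary, and it is not bookkeeping. The Gao--Jackson--Seward free subshift is obtained as the orbit closure of a single, delicately constructed $2$-coloring (built from a blueprint/marker structure); it is not presented by an explicit family of forbidden patterns. Even if the distinguished configuration could be computed using the word-problem algorithm, the orbit closure of a computable configuration need not be effectively closed: the patterns that \emph{appear} in it form a recursively enumerable set, but what you need to enumerate are the patterns that do \emph{not} appear, i.e.\ the complement, and nothing in your sketch produces such an enumeration. So ``decidable word problem makes the marker construction effective'' is an assertion, not an argument; making GJS yield a subshift cut out by a recursively enumerable set of pattern codings would be a substantial piece of work that your plan does not carry out. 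The paper avoids exactly this difficulty by taking the subshift of square-free colorings of the Cayley graph from \cite{AubBarTho2015}: that subshift is defined from the outset by pattern avoidance (forbid every pattern containing a square read along a non-self-intersecting path), so with decidable word problem a Turing machine can compute the balls $B(1_H,n)$ and enumerate the forbidden patterns directly; non-emptiness comes from the Alon--Grytczuk--Haluszczak--Riordan theorem \cite{alonetal_nonrepetitivecoloringofgraphs} via the Lov\'asz local lemma and compactness, and freeness (strong aperiodicity) is proved for square-free colorings. Either supply an effective, intrinsically pattern-defined version of the GJS construction, or replace that ingredient by the square-free-coloring subshift; with the latter substitution the rest of your argument matches the paper's proof.
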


\begin{proof}
	In \cite{AubBarTho2015} an effectively closed $H$-subshift is constructed for all finitely generated groups $H$ with decidable word problem. One way to construct this object is as follows: a Theorem~\cite{alonetal_nonrepetitivecoloringofgraphs} of Alon, Grytczuk, Haluszczak and Riordan uses Lov\'asz local lemma to show that every finite regular graph of degree $\Delta$ can be vertex-colored with at most $(2e^{16}+1)\Delta^2$ colors such that the sequence of colors in any non-intersecting path does not contain a square. Using compactness arguments this can be extended to Cayley graphs $\Gamma(H,S)$ of finitely generated groups where the bound takes the form $2^{19}|S|^2$ colors where $|S|$ is the cardinality of a set of generators of $H$. One can also show that the set of square-free vertex-coloring of $\Gamma(H,S)$ yields a strongly aperiodic subshift. In the case where $G$ has decidable word problem, a Turing machine can construct the sequence of balls $B(1_G,n)$ and enumerate a codification of all patterns containing a square colored path. Therefore we obtain an effectively closed, strongly aperiodic and non-empty subshift. Using the fact that $G$ is recursively presented one can do the coding of theorem $\ref{theorem.projective}$ to obtain a free non-empty effectively closed $H$-flow $(\Omega,f')$. Applying Theorem~\ref{theoremaperiodic} concludes the proof.\end{proof}

We remark that this corollary is an alternative proof to a construction done by Ugarcovici, Sahin and Schraudner in 2014 showing that the discrete Heisenberg group $\mathcal{H}$ admits non-empty strongly aperiodic SFTs. This falls directly from our theorem as $\mathcal{H} \cong \ZZ^2 \rtimes_{\varphi} \ZZ$ for $\varphi(1) = \begin{pmatrix}
1 & 1 \\ 0 & 1
\end{pmatrix}$. In their proof they use a similar trick using as a base the Robinson tiling~\cite{Robinson1971}. They use the lattices of crosses in this object to match the different $(\ZZ^2,0)$-cosets correctly to force a trivial action in the $\ZZ$ direction and use a counter machine to create aperiodicity in the other direction. In our construction the Robinson tiling got replaced by the substitutive subshifts $\texttt{Sub}_{(a,b)}$ which are able to match correctly the cosets of any possible automorphism and the counter machine by the simulation of the free $H$-flow. We also remark that Corollary~\ref{corollaryaperioci} answers some open questions in their talk asking the same property for the Flip, Sol groups and the powers of the Heisenberg group. The only case it does not solve is the one of their two-dimensional Baumslag Solitar group, as the matrix they used to define it is not invertible and can not be expressed as a semidirect product.

A theorem of Jeandel~\cite{Jeandel2015} says that for recursively presented groups $G$, the existence of a non-empty strongly aperiodic subshift $X \subset \ag^G$ implies that the word problem of $G$ is decidable. We can extend this to the case of arbitrary flows. This gives a deep relation between computability and dynamical properties.

\begin{corollary}
	Let $H$ be a recursively presented and finitely generated group. There exists a strongly aperiodic effectively closed $H$-flow if and only if the word problem of $H$ is decidable.
\end{corollary}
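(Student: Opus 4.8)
The statement is an ``if and only if'', and I would prove the two implications separately. The forward direction --- existence of a non-empty strongly aperiodic effectively closed $H$-flow implies decidability of the word problem --- is the genuine extension of Jeandel's theorem and is the substantive half; the converse only has to recall the construction already used for Corollary~\ref{corollaryaperioci}.

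For the forward direction, suppose $(X,f)$ is a non-empty strongly aperiodic (equivalently, free) effectively closed $H$-flow. The plan is to move to the symbolic world and apply Jeandel's result. Take $\varphi$ trivial, so that $G = \ZZ^2 \rtimes_\varphi H = \ZZ^2 \times H$, and apply Theorem~\ref{theoremaperiodic} to $(X,f)$ to obtain a non-empty strongly aperiodic SFT $Y \subset \ag^G$. Since $H$ is finitely generated and recursively presented, so is $G$: a presentation of $G$ is obtained from one of $H$ by adjoining two new generators together with the finitely many relations expressing that these two generators commute with each other and are central. Hence Jeandel's theorem~\cite{Jeandel2015} applies to $G$ and yields that the word problem of $G$ is decidable. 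Finally, decidability descends to $H$: a word in the generators of $H$ represents $1_H$ if and only if, read in $G$, it represents $1_G$ (because $H$ embeds in $G=\ZZ^2\times H$ as $\{\vec 0\}\times H$), so an algorithm for the word problem of $G$ immediately solves that of $H$; alternatively, $\ZZ^2$ has decidable word problem, so decidability for $\ZZ^2\times H$ forces decidability for $H$.

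For the converse, assume the word problem of $H$ is decidable. Then, exactly as recalled in the proof of Corollary~\ref{corollaryaperioci}, the square-free vertex-colourings of the Cayley graph $\Gamma(H,S)$ form a non-empty $H$-subshift on which the shift acts freely~\cite{alonetal_nonrepetitivecoloringofgraphs,AubBarTho2015}, and decidability of the word problem makes this subshift effectively closed (a Turing machine can enumerate the balls $B(1_H,n)$ and the patterns that carry a squared path). Equivalently, one may simply invoke the free, non-empty, effectively closed $H$-flow $(\Omega,f')$ produced there via the coding of Theorem~\ref{theorem.projective}. In either case this flow is, by definition, a strongly aperiodic effectively closed $H$-flow, which is what was required.

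There is no serious obstacle once Theorem~\ref{theoremaperiodic} and Jeandel's theorem are on the table; the one idea is that passing to the auxiliary group $\ZZ^2\times H$ lets us trade the possibly non-expansive flow $(X,f)$ for an honest SFT to which Jeandel's result applies, thereby bypassing the lack of a symbolic representation of $(X,f)$ itself. The only points needing a line of care are the recursive presentability of $\ZZ^2\times H$ and the (routine) transfer of decidability of the word problem from $\ZZ^2\times H$ back to $H$.
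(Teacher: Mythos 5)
Your proof is correct and follows essentially the same route as the paper: the converse uses the effectively closed, free subshift built from square-free colourings (as in Corollary~\ref{corollaryaperioci}), and the forward direction feeds the free effectively closed $H$-flow into Theorem~\ref{theoremaperiodic} to get a non-empty strongly aperiodic SFT on $\ZZ^2\rtimes H$ and then applies Jeandel's theorem. You merely make explicit some details the paper leaves implicit (choosing the trivial action so $G=\ZZ^2\times H$, recursive presentability of $G$, and the descent of word-problem decidability from $G$ to $H$), which is fine.
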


\begin{proof}
	If the word problem of $H$ is decidable, we can use the effectively closed subshift constructed in~\cite{AubBarTho2015} as an example. Conversely, Jeandel's result implies that if a recursively presented group admits a non-empty effectively closed and strongly aperiodic subshift then it's word problem is decidable. Using Theorem~\ref{theoremaperiodic} we can construct a strongly aperiodic subshift from any free effectively closed $H$-flow. Therefore the word problem of $H$ is decidable.
\end{proof}

\subsection{A generalization and comments on the size of the extension}

In this last portion we want to make explicit that the technique used in the proof of Theorem~\ref{simulationTHEOREM} can be easily be generalized to the following context

\begin{theorem}\label{simulationTHEOREMZD}
	Let $H$ be finitely generated group, $d \geq 2$ and $G = \ZZ^d \rtimes H$. For every $H$-effectively closed flow $(X,f)$ there exists a $G$-SFT whose $H$-subaction is an extension of $(X,f)$.
\end{theorem}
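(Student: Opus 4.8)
The plan is to re-run the four-part construction of Section~\ref{section.generalities} verbatim, with $\ZZ^d$ in place of $\ZZ^2$ (and keeping $p=3$), and to check that the three places where two-dimensionality was genuinely used still work. These are: the substitution $\texttt{s}_v$ and the subshift $\texttt{Sub}_v$ of Section~\ref{section.substitution}; the soficity of $\texttt{Sub}_v$ obtained from Mozes' theorem~\cite{mozes}; and the simulation theorem of~\cite{AubrunSablik2010,DurandRomashchenkoShen2010} used to realize $\texttt{Top}(X,f)$ as a sofic subshift in two dimensions. Everything else in Section~\ref{section.generalities} --- the effectiveness of $\texttt{Top}(X,f)$ (Proposition~\ref{Xtildeiseffective}), the product subshift $\Pi(X,f)$ with its local matching rules, the subshift $\texttt{Final}(X,f)$ built by embedding an SFT cover into $G=\ZZ^d\rtimes_\varphi H$ through $z\mapsto(z,1_H)$ and adding the finite family $\mathcal{G}$, the decoding $\Upsilon\circ\pi\circ\phi$, and the proofs of Claims~\ref{propositionlatticescarryinformation} and~\ref{finalsubshiftproperties} and of Proposition~\ref{proposition.final} --- is dimension-agnostic and transcribes with the $d$ coordinate axes playing the role formerly played by the horizontal and vertical directions.

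First I would redefine, for $v\in(\ZZ/p\ZZ)^d\setminus\{\vec{0}\}$, the substitution $\texttt{s}_v:\ag\to\ag^{\{0,\dots,p-1\}^d}$ by the same two rules, and recheck the five properties listed in Section~\ref{section.substitution}. A cell at position $w$ in $\texttt{s}_v^n(\tileb)$ is black if and only if $w=\vec{0}$ or the first nonzero base-$p$ digit of $w$ equals $v$; from this one reads off primitivity (hence minimality of $\texttt{Sub}_v$), unique derivation, and the fact that the level-$m$ black lattice $B_m(z)$ of a configuration $z\in\texttt{Sub}_v$ is a translate of $p^m v+p^{m+1}\ZZ^d$, that the $B_m(z)$ are pairwise disjoint, and that they exhaust the black squares up to one possible exceptional point $B_\infty(z)$. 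Soficity of $\texttt{Sub}_v$ as a $\ZZ^d$-subshift, together with the existence of an almost $1$-$1$ SFT cover, then follows from the $\ZZ^d$-version of Mozes' theorem (or from a direct construction of the cover). The automorphism-equivariance is the same computation with $d\times d$ matrices: $\varphi\in\Aut(\ZZ^d)$ corresponds to $A_\varphi\in GL(\ZZ,d)$ with $\det A_\varphi\in\{-1,1\}$, its reduction $A_{\widetilde{\varphi}}$ modulo $p$ has determinant $\pm 1$ in $\ZZ/p\ZZ$ and hence lies in $GL(\ZZ/p\ZZ,d)$, defining $\widetilde{\varphi}\in\Aut((\ZZ/p\ZZ)^d)$; telescoping as in Section~\ref{section.substitution} shows $\varphi(B_m(z))$ is a level-$m$ lattice of some configuration in $\texttt{Sub}_{\widetilde{\varphi}(v)}$, and a compactness argument yields $z'\in\texttt{Sub}_{\widetilde{\varphi}(v)}$ with $\varphi((B_m(z))_m)=(B_m(z'))_m$.

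The remaining changes are bookkeeping. The $\ZZ$-subshift $\texttt{Top}(X,f)$ is unchanged. From it I would build $d$ planar copies $\texttt{Top}(X,f)^{(1)},\dots,\texttt{Top}(X,f)^{(d)}$, where in $\texttt{Top}(X,f)^{(i)}$ the $i$-th axis carries an element of $\texttt{Top}(X,f)$ and the configuration is constant along every other axis; each is sofic because the two-dimensional simulation theorem realizes $\texttt{Top}(X,f)$ as a sofic subshift in the plane of the $i$-th axis and one auxiliary axis, after which one extends the SFT cover by constancy in the remaining $d-2$ directions. Then $\Pi(X,f)$ is the product of these $d$ Toeplitz layers with the layers $\texttt{Sub}_v$, $v\in(\ZZ/p\ZZ)^d\setminus\{\vec{0}\}$, subject to: for each coordinate $i$ with $v_i\neq 0$, the black lattice of $\texttt{Sub}_v$ matches, on $\texttt{Layer}^{(i)}_{v_i,1_H}$, the positions of $\texttt{Top}(X,f)^{(i)}$ carrying a symbol of $\{0,1\}$; and on the black lattice of $\texttt{Sub}_{(1,\dots,1)}$ the layers $\texttt{Layer}^{(i)}_{1,s}$ coincide, for all $i\in\{1,\dots,d\}$ and all $s\in S$. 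The map $\Upsilon$ reads $x_m$ off $\texttt{Layer}^{(1)}_{1,1_H}$ on $B_m(\texttt{Sub}_{(1,\dots,1)})$, and Claim~\ref{propositionlatticescarryinformation} carries over unchanged. Finally $\texttt{Final}(X,f)$ is built exactly as before, the family $\mathcal{G}$ now indexed by $s\in S$ with $(a_1,\dots,a_d)=\widetilde{\varphi}_{s^{-1}}(1,\dots,1)$, which is nonzero since $\widetilde{\varphi}_{s^{-1}}$ is invertible so some $a_i\neq 0$; it forces each $\tileb$ of $\texttt{Sub}_{(a_1,\dots,a_d)}$ in the $(\ZZ^d,1_H)$-coset to be matched with a $\tileb$ of $\texttt{Sub}_{(1,\dots,1)}$ in the $(\ZZ^d,s^{-1})$-coset, together with equality of $\texttt{Layer}^{(i)}_{a_i,s}$ at $(\vec{0},1_H)$ with $\texttt{Layer}^{(i)}_{1,1_H}$ at $(\vec{0},s^{-1})$ for each $i$ with $a_i\neq 0$. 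The proofs of Claim~\ref{finalsubshiftproperties} and Proposition~\ref{proposition.final} --- including the configuration $y^*_{(w,h)}=(y^{h^{-1}})_{\varphi_{h^{-1}}(w)}$ used to establish surjectivity --- then transcribe word for word, with the case split ``$a\neq 0$, the $b\neq 0$ case being analogous'' replaced by ``fix some $i$ with $a_i\neq 0$''.

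I expect no new idea to be required. The one point deserving genuine care is the soficity of the $d$-dimensional substitutive subshift $\texttt{Sub}_v$ and the existence of its almost $1$-$1$ SFT cover --- a $\ZZ^d$-analogue of Mozes' theorem and of its unique-derivation consequence --- which I would single out as the main technical ingredient; it is known, and the two-dimensional argument adapts directly. All the arithmetic with the lattices $B_m$ and with the mod-$p$ reduction of $GL(\ZZ,d)$ is formally identical to the case $d=2$ carried out in Section~\ref{section.generalities}.
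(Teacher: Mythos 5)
Your proposal is correct and takes essentially the same route as the paper's own (very brief) argument for Theorem~\ref{simulationTHEOREMZD}: substitutions indexed by $(\ZZ/3\ZZ)^d\setminus\{\vec{0}\}$, $d$ Toeplitz layers $\texttt{Top}(X,f)^{e_i}$ constant in the other coordinate directions, and the observation that the rest of the construction of Section~\ref{section.generalities} is dimension-agnostic. Your explicit isolation of the $\ZZ^d$-version of Mozes' theorem (soficity and almost $1$-$1$ SFT cover of $\texttt{Sub}_v$) is a point the paper leaves implicit, but it is known and does not alter the approach.
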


Indeed, instead of considering vectors in $(\ZZ/3\ZZ)^2 \setminus \{ \vec{0} \}$ we use $v \in (\ZZ/3\ZZ)^d\setminus \{ \vec{0} \}$ and $d$-dimensional substitutions $\texttt{s}_v$ defined analogously. The subshifts generated by these substitutions carry $\ZZ^d$-lattices and the configurations $z \in \texttt{Sub}_v$ can be described in the same way as before by lattices $B_m(z)$. The Toeplitz construction $\texttt{Top}(X,f)$ stays the same but instead of just constructing $\texttt{Top}(X,f)^{H}$ and $\texttt{Top}(X,f)^{V}$ we construct $\texttt{Top}(X,f)^{e_i}$ for every canonical vector $\{e_i\}_{i \leq d}$ where the $\langle e_i \rangle$-projective subdynamics yields $\texttt{Top}(X,f)$ and the configurations are extended periodically everywhere else. The rest of the construction translates directly to this setting.

We also want to remark the following: Hochman's theorem gives further information about the extension. Formally, given an effectively closed $\ZZ^d$-flow. A $\ZZ^{d+2}$-SFT is constructed such that the $\ZZ^d$-subaction is an \emph{almost trivial isometric extension} (ATIE) of the $\ZZ^d$-flow. An extension $(Z,f_Z) \twoheadrightarrow (Y,f_Y)$ is an ATIE if we can interpolate a factor $$(Z,f_Z)\twoheadrightarrow (Y,f_Y) \times (W,f_W) \twoheadrightarrow (Y,f_Y)$$ such that $(W,f_W)$ is an isometric action of a totally disconnected space, $(Y,f_Y) \times (W,f_W) \twoheadrightarrow (Y,f_Y)$ is the projection of the first coordinate  $(Z,f_Z)\twoheadrightarrow (Y,f_Y) \times (W,f_W)$ is almost everywhere $1-1$, that is, it satisfies that the set of points with unique preimage has measure 1 under any invariant Borel probability measure.

The idea behind the notion of ATIE is of an extension which is in a certain sense ``small''. It consists basically on adding a simple system $(W,f_W)$ as a product and then considering a measure equivalent action as the extension. Many properties such as the topological entropy (at least for $\ZZ^d$-actions) are preserved by taking ATIEs.

In our construction the only obstruction towards obtaining an ATIE is the use of the simulation theorem of effectively closed $\ZZ$-subshifts as projective subactions of sofic $\ZZ^2$-subshifts. This theorem in its current state does not yield an almost everywhere $1-1$ extension. The rest of the proof can be adapted to obtain an ATIE, for instance, the substitutive layers can be coupled in a single substitution to avoid the degree of freedom when either $a$ or $b$ are zero. Furthermore, the substitutive layers and the Toeplitz structure can be factorized in the isometric action as they are invariant under the $H$-subaction. Therefore, the maps $\Upsilon \circ \pi$ do not pose obstructions to obtaining an ATIE. Everything that remains is the factor $\phi: \widehat{\texttt{Final}}(X,f) \twoheadrightarrow \texttt{Final}(X,f)$. Here the substitutive layers don't present a problem as they come from a primitive substitution with unique derivation and thus Mozes's theorem~\cite{mozes} gives the almost $1-1$ SFT extension. The only thing that remains is the aforementioned almost $1-1$ SFT extension for $\texttt{Top}(X,f)^{H}$ and $\texttt{Top}(X,f)^{V}$ that could be obtained by refining that simulation theorem.

\bibliographystyle{plain}
\bibliography{Bibliography}
 
\end{document}